\newtheorem{theorem}{Theorem}
\newtheorem{corollary}[theorem]{Corollary}
\newtheorem{conj}[theorem]{Conjecture}
\newtheorem{lemma}[theorem]{Lemma}
\theoremstyle{definition}
\theoremstyle{remark}
\newtheorem{rem}{Remark}
\numberwithin{equation}{section}
\numberwithin{theorem}{section}
\numberwithin{defn}{section}
\newcommand{\s}{\mathcal{S}}
\newcommand{\abs}[1]{\left\vert#1\right\vert}
\newcommand{\CT}{\mathrm{CT}}
\begin{document}
\title[Modularity of tadpole Nahm sums]{Modularity of tadpole Nahm sums in ranks 4 and 5}
\author{Changsong Shi and Liuquan Wang}

\address[C.\ Shi]{School of Mathematics and Statistics, Wuhan University, Wuhan 430072, Hubei, People's Republic of China}
\email{changsong@whu.edu.cn}

\address[L.\ Wang]{School of Mathematics and Statistics, Wuhan University, Wuhan 430072, Hubei, People's Republic of China}
\email{wanglq@whu.edu.cn;mathlqwang@163.com}

\subjclass[2010]{11P84, 33D15, 33D45, 11F03}

\keywords{Nahm sums; Rogers--Ramanujan type identities; tadpole Cartan matrix; Nahm's problem}

\begin{abstract}
Around 2016, Calinescu, Milas and Penn conjectured that the rank $r$ Nahm sum associated with the $r\times r$ tadpole Cartan matrix is modular, and they provided a proof for $r=2$. The $r=3$ case was recently resolved by Milas and Wang. We prove this conjecture for the next cases $r=4,5$. We also prove the modularity of some companion Nahm sums by establishing the corresponding Rogers--Ramanujan type identities. A key new ingredient in our proofs is some rank reduction formulas which allow us to decompose higher rank tadpole Nahm sums to mixed products of some lower rank Nahm-type sums and theta functions.
\end{abstract}
\maketitle

\section{Introduction and main results}
Through this paper, we use the following standard $q$-series notation:
\begin{align*}
   & (a; q)_{n}:=\prod_{k=0}^{n-1}\left(1-a q^{k}\right), \quad(a ; q)_{\infty}:=\prod_{k=0}^{\infty}\left(1-a q^{k}\right), |q|<1, \\
& \left(a_{1}, \dots, a_{m} ; q\right)_{n}:=\left(a_{1} ; q\right)_{n} \cdots\left(a_{m} ; q\right)_{n}, \quad n \in \mathbb{N} \cup\{\infty\}.
\end{align*}

One of the central problems linking the theory of $q$-series and modular forms is to find all modular $q$-hypergeometric series. Along this direction, W.\ Nahm \cite{Nahm1994,Nahmconf,Nahm2007} considered a particular important class of series which were usually referred as Nahm sums. For a $r\times r$ rational positive definite matrix $A$, rational vector $B$ of length $r$ and rational scalar $C$, the rank $r$ Nahm sum corresponding to $(A,B,C)$ is defined as
\begin{align}
    f_{A,B,C}(q):=\sum_{n=(n_1,...,n_r)^\mathrm{T}\in(\mathbb{Z}_{\geq0})^r}\frac{q^{\frac{1}{2}n^\mathrm{T}An+n^\mathrm{T}B+C}}{(q;q)_{n_1}\cdots (q;q)_{n_r}}.
\end{align}
Nahm's problem is to find all modualr Nahm sums $f_{A,B,C}(q)$, and such $(A,B,C)$ is called as a \emph{modular triple}. The motivation of Nahm's study comes from physics as such modular Nahm sums are expected to be characters of some rational conformal field theories.

In the rank one case, Zagier \cite{Zagier} solved Nahm's problem by proving that there are exactly seven modular triples $(A,B,C)$ corresponding to
\begin{align}\label{eq-triple}
&(1/2,0,-1/40), ~~ (1/2,1/2,1/40), ~~(1,0,-1/48), ~~ (1,1/2,1/24), \nonumber \\
&(1,-1/2,1/24), ~~ (2,0,-1/60), ~~ (2,1,11/60).
\end{align}
The modularity of the triples with $A=1$ follows from one of Euler's $q$-exponential identities (see e.g.\ \cite[Corollary 2.2]{Andrews-book})
\begin{align}
    \sum_{n=0}^{\infty} \frac{q^{\frac{n^2-n}{2}} z^{n}}{(q ; q)_{n}}=(-z ; q)_{\infty}, \quad|z|<1. \label{euler-2}
\end{align}
The modularity of the other four triples were justified by Rogers' identities \cite[pp.\ 328,330,331]{Rogers1894}: for $a=0,1$,
\begin{align}
    \sum_{n=0}^\infty\frac{q^{n^2+an}}{(q;q)_n}
   & =
    \frac{1}{(q^{a+1},q^{4-a};q^5)_\infty}, \label{RR} \\
    \sum_{n=0}^\infty \frac{q^{n^2+an}}{(q^4;q^4)_n}&=\frac{1}{(-q^2;q^2)_\infty (q^{a+1},q^{4-a};q^5)_\infty}. \label{Rogers-id}
\end{align}
The identities in \eqref{RR} were later rediscovered by Ramanujan and hence are usually referred as Rogers--Ramanujan identities. They motivate poeple to search for similar sum-to-product $q$-hypergeometric series identities. It is worth mentioning that the Nahm sums in the left sides of \eqref{RR} correspond to two characters of the Lee--Yang model which agrees with Nahm's motivation.

When the rank $r\geq 2$, Nahm's problem becomes harder and is far from being solved. Through an extensive search, Zagier found 11 and 12 examples of possible modular triples in the cases of rank two and three, respectively. The modularity of these examples have all been proved by works of Zagier \cite{Zagier}, Vlasenko--Zwegers \cite{VZ}, Cherednik--Feigin \cite{Feigin}, Cao--Rosengren--Wang \cite{CRW} and Wang \cite{Wang-rank2,Wang-rank3}. Zagier also expected that there might exist some dual structure between modular triples. He \cite[p.\ 50, (f)]{Zagier} observed that if $(A,B,C)$ is a rank $r$ modular triple, then it is likely that the \emph{dual triple}
\begin{align}\label{eq-dual}
    (A^\star,B^\star,C^\star):=(A^{-1},A^{-1}B,\frac{1}{2}B^\mathrm{T}A^{-1}B-\frac{r}{24}-C)
\end{align}
is also a modular triple. Recently, Wang \cite{Wang-counter} provided some counterexamples to this observation.

It is now known that Zagier's list of rank two and three modular examples are incomplete. Vlasenko and Zwegers \cite{VZ} found some additional rank two modular triples. Recently, Cao and Wang \cite{Cao-Wang} discovered some new rank three modular triples through a lift-dual operation. It is difficult to give a complete classification of modular Nahm sums but finding families of modular Nahm sums with arbitrary rank is quite meaningful. For instance, as a generalization of the Rogers--Ramanujan identities, the Andrews--Gordon identities \cite{Andrews1974,Gordon1961} assert that: for integers $k,s$ such that $k\geq 2$ and $1\leq s \leq k$,
\begin{align}
\sum_{n_1,\dots,n_{k-1}\geq 0} \frac{q^{N_1^2+\cdots+N_{k-1}^2+N_s+\cdots +N_{k-1}}}{(q;q)_{n_1}(q;q)_{n_2}\cdots (q;q)_{n_{k-1}}}  =\frac{(q^s,q^{2k+1-s},q^{2k+1};q^{2k+1})_\infty}{(q;q)_\infty} \label{AG}
\end{align}
where if $j\leq k-1$, $N_j=n_j+\cdots+n_{k-1}$ and $N_k=0$. This generates a family of modular Nahm sums with arbitrary rank.

Around 2016, Calinescu, Milas and Penn \cite{CMP} considered another family of Nahm sums associated with the tadpole Cartan matrix $T_r=(a_{ij})_{r\times r}$ with entries defined as
\begin{align}
   & a_{rr}=1,\ a_{ii}=2 ~~ \text{for}~~ i<n, ~~ a_{ij}=-1 ~~\text{for} ~~ \abs{i-j}=1, \\
    &\text{and}~~  a_{ij}=0 ~~ \text{otherwise.}\nonumber
\end{align}
Here we agree that $T_1=(1)$. Let
\begin{align}
    \chi_r(x_1,\dots,x_r)=\chi_r(x_1,\dots,x_r;q):=\sum_{n=(n_1,\dots,n_r)^\mathrm{T}\in(\mathbb{Z}_{\geq0})^r}\frac{q^{\frac{1}{2}n^\mathrm{T}T_r n}x_1^{n_1}\cdots x_r^{n_r}}{(q;q)_{n_1}\cdots (q;q)_{n_r}}
\end{align}
be a generalized tadpole Nahm sum. Calinescu et al. \cite{CMP} proposed the following conjecture regarding its modularity.
\begin{conj}\label{conj-CMP}
(Cf.\ \cite[Conjecture 1]{CMP}.) For any $r\geq 2$ there exists a rational number $a$ such that $q^a\chi_r(1,1,\dots,1;q)$ is modular.
\end{conj}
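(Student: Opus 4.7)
The plan is to establish a \emph{rank reduction formula} expressing $\chi_r(1,\dots,1;q)$ as a theta-function factor multiplied by a modified lower-rank Nahm-type sum, and then to combine iterations of this formula with Rogers--Ramanujan type identities of lower rank to obtain an explicit product representation. Modularity of the product is then automatic from the classical transformation formulas of the Dedekind eta and Jacobi theta functions.

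The concrete first step is to sum over the last index $n_r$: the only terms of $\tfrac{1}{2}n^\tran T_rn$ involving $n_r$ are $\tfrac{1}{2}n_r^2-n_{r-1}n_r$, so Euler's identity \eqref{euler-2} applied with $z=q^{1/2-n_{r-1}}$ gives
\[\sum_{n_r\geq 0}\frac{q^{\frac{1}{2}n_r^2-n_{r-1}n_r}}{(q;q)_{n_r}}=(-q^{1/2-n_{r-1}};q)_\infty=q^{-n_{r-1}^2/2}(-q^{1/2};q)_{n_{r-1}}(-q^{1/2};q)_\infty,\]
after pulling out the negative-exponent factors. Substituting this back and observing that the extra $q^{-n_{r-1}^2/2}$ precisely converts the remaining quadratic form into $\tfrac{1}{2}\tilde n^\tran T_{r-1}\tilde n$ for $\tilde n=(n_1,\dots,n_{r-1})$, one obtains
\[\chi_r(1,\dots,1;q)=(-q^{1/2};q)_\infty\sum_{\tilde n\in(\mathbb{Z}_{\geq 0})^{r-1}}\frac{q^{\frac{1}{2}\tilde n^\tran T_{r-1}\tilde n}\,(-q^{1/2};q)_{n_{r-1}}}{(q;q)_{n_1}\cdots(q;q)_{n_{r-1}}}.\]
The prefactor is, up to a rational power of $q$, a ratio of eta functions, while the inner sum is a rank-$(r-1)$ Nahm-type sum with an extra inserted factor of $(-q^{1/2};q)_{n_{r-1}}$---exactly the ``mixed product'' structure advertised in the abstract.

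To handle the modified inner sum for $r=4,5$, I would expand the inserted factor via the $q$-binomial theorem, interchange summations, and shift $n_{r-1}$ suitably to split off a further theta-type factor. Iterating the reduction (once for $r=4$, potentially twice for $r=5$) should reduce matters to a sum evaluable by the Rogers--Ramanujan identities \eqref{RR}, the Andrews--Gordon family \eqref{AG}, or a tailored identity proved by constructing an appropriate Bailey pair. Analogous reductions on alternative summation indices (e.g.\ summing out $n_1$ rather than $n_r$) would produce the companion Nahm sum identities alluded to in the abstract.

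The main obstacle will be identifying and proving the correct Rogers--Ramanujan type identity for the modified lower-rank sum: because the inserted factor $(-q^{1/2};q)_{n_{r-1}}$ is non-standard, such identities are not directly available in Slater's list, and for $r=5$ one likely needs a new companion identity beyond what suffices for $r\leq 4$---explaining why the conjecture is settled rank by rank. Once the requisite product formula is in hand, the existence of a rational $a$ with $q^a\chi_r(1,\dots,1;q)$ modular follows immediately from the modular transformation properties of $\eta$ and Jacobi theta functions.
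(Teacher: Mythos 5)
Your opening step is correct and in fact coincides with the first move in the paper's proof of its rank reduction formulas: summing over $n_r$ with Euler's identity \eqref{euler-2} and pulling out the negative powers via \eqref{finite-6} does yield
\[
\chi_r(1,\dots,1;q)=(-q^{1/2};q)_\infty\sum_{\tilde n\in(\mathbb{Z}_{\geq 0})^{r-1}}\frac{q^{\frac{1}{2}\tilde n^\tran T_{r-1}\tilde n}\,(-q^{1/2};q)_{n_{r-1}}}{(q;q)_{n_1}\cdots(q;q)_{n_{r-1}}},
\]
since $X_r-X_{r-1}=\tfrac12(n_r-n_{r-1})^2$ accounts exactly for the $q^{-n_{r-1}^2/2}$ you extract. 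But everything after this is a plan rather than a proof, and the plan as stated does not lead where you hope. The obstruction is that the inserted factor $(-q^{1/2};q)_{n_{r-1}}$ blocks a second clean application of Euler's identity (the $n_{r-1}$-sum now carries both a quadratic exponent and a finite Pochhammer, so it is neither an Euler nor a $q$-binomial sum), and iterating your one-variable elimination only lowers the rank by one per step while accumulating such insertions. The paper's Theorem \ref{s2} carries out essentially this iteration to its end and arrives at \eqref{eq-thm-new-repn}, a representation with factors $(-q^{1/2};q)_{n_r}$ and squared Pochhammers --- and the authors explicitly remark that it is unclear whether that representation helps prove the conjecture. So your route, even if the algebra is pushed through, is known to stall.

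What actually makes the paper's proof work is a different mechanism that your proposal does not contain: a constant-term/Jacobi-triple-product argument exploiting $X_m=\tfrac12 n_1^2+\tfrac12\sum_i(n_i-n_{i+1})^2$, which pairs the variables (under constraints $x_{2i}x_{2i+1}=1$) and, via the identity $\tfrac12(i^2+j^2)=\tfrac14(i+j)^2+(i-\tfrac{i+j}{2})^2$, converts the chain of cross terms into quarter-squares together with inserted theta series $\sum_{\ell}q^{(\ell-\frac{n_k-n_{k+1}}{2})^2}$. This roughly \emph{halves} the rank: rank $4$ reduces to the rank-three sums $Z_k(q)$, which require the genuinely new Rogers--Ramanujan type identities of Theorem \ref{thm-dual-Zagier} (proved via Heine's transformation and specific Slater identities), and rank $5$ reduces to rank-two sums $H_k(q)$ evaluated directly from Slater's list. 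Finally, the resulting combinations $Z_k^{(0)}\theta_0+Z_k^{(1)}\theta_1$ and $H_k^{(i)}\theta_0^a\theta_1^b$ must be verified to be eta-quotients by theta-function identity checking. None of these three ingredients --- the pairing/constant-term reduction, the specific new sum-to-product identities, and the final theta verification --- appears in your proposal, so the gap is the entire core of the argument rather than a technical detail. (You do correctly anticipate that new companion identities are the bottleneck; the paper confirms this, as one rank-four companion, Conjecture \ref{conj-dZ5}, remains open.)
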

The rank one case corresponds to the known modular triple $(1,0,-1/48)$. Calinescu et al. \cite{CMP} also proved the conjecture for $r=2$ which coincides with Zagier's third rank two example listed in \cite[Table 2]{Zagier}. Recently, Milas and Wang \cite{MW24} proved this conjecture for $r=3$ by establishing some Rogers--Ramanujan type identities for the corresponding Nahm sums. As the rank increases, evaluating the tadpole Nahm sums becomes much more complicated, which makes the conjecture difficult when $r\geq 4$.

In this paper, our aim is to prove the above conjecture for the next two cases $r=4,5$. A major new ingredient making this possible is that we establish the following rank reduction formulas.
\begin{theorem}\label{thm-rank-reduction}
If $x_{2i}x_{2i+1}=1$ for $i=0,1,\dots,r-1$,  then we have
\begin{align}\label{eq-reduce-even}
    &\chi_{2r}(x_1,\dots,x_{2r};q)=\frac{(-q^{\frac{1}{2}}x_{2r};q)_{\infty}}{(q;q)_{\infty}^{r-1}}\sum_{n_1,\dots,n_{r+1}\geq0}\Bigg(\frac{q^{\frac{1}{2}n_1^2+\frac{1}{2}(n_1+n_2)^2+\frac{1}{4}\sum_{i=2}^r(n_i-n_{i+1})^2}}{(q;q)_{n_1}\cdots (q;q)_{n_{r+1}}} \nonumber\\
    & \quad \times  x_0^{-n_1}\prod_{j=1}^r(\frac{x_{2j}}{x_{2j-2}})^{n_{j+1}} \prod_{k=2}^r\Big(\sum_{\ell_k\in\mathbb{Z}}q^{(\ell_k-\frac{n_k-n_{k+1}}{2})^2}x_{2k-2}^{-\ell_k}\Big)\Bigg).
\end{align}
If $x_{2i-1}x_{2i}=1$ for $i=1,\dots,r$, then we have
    \begin{align}\label{eq-reduce-odd}
        &\chi_{2r+1}(x_1,...,x_{2r+1};q)=\frac{(-q^{\frac{1}{2}}x_{2r+1};q)_{\infty}}{(q;q)_{\infty}^{r}}  \sum_{n_{1},\dots,n_{r}\geq 0}\Bigg(\frac{q^{\frac{1}{4}\sum_{i=1}^{r}(n_i-n_{i-1})^2}}{(q;q)_{n_{1}}\cdots (q;q)_{n_{r}}}\nonumber\\
        &\quad \times\prod_{j=1}^{r}(\frac{x_{2j+1}}{x_{2j-1}})^{n_j}\prod_{k=1}^{r}\Big(\sum_{\ell_{k}\in \mathbb{Z}}q^{(\ell_{k}-\frac{n_{k-1}-n_{k}}{2})^2}x_{2k-1}^{-\ell_k}\Big)\Bigg)
    \end{align}
where $n_0=0$.
\end{theorem}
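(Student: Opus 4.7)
The plan is to derive both identities through a chain of classical $q$-series transformations, using Euler's identity (1.2), the finite $q$-binomial theorem, the Durfee rectangle summation
\begin{equation*}
\sum_{P\geq \max(0,t)}\frac{q^{P(P-t)}}{(q;q)_P(q;q)_{P-t}}=\frac{1}{(q;q)_\infty}\quad(t\in\mathbb{Z}),
\end{equation*}
and the Jacobi triple product. I would begin by summing out the last index via Euler: for the odd case, the sum over $n_{2r+1}$ evaluates to $(-q^{1/2-n_{2r}}x_{2r+1};q)_\infty = (-q^{1/2}x_{2r+1};q)_\infty\cdot (-q^{1/2-n_{2r}}x_{2r+1};q)_{n_{2r}}$. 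The first factor matches the prefactor in (1.10), and the second, a finite Pochhammer, is expanded by the $q$-binomial theorem to introduce a new summation index that will play the role of $n_r$ on the right-hand side. The even case is analogous with $n_{2r}$ and $x_{2r}$.

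The core step is the pair reduction. An isolated-pair identity follows from the substitution $\ell = a-b$, $P=a$: one finds
\begin{equation*}
\sum_{a,b\geq 0}\frac{q^{a^2+b^2-ab}u^{a-b}}{(q;q)_a(q;q)_b}=\frac{1}{(q;q)_\infty}\sum_{\ell\in\mathbb{Z}}q^{\ell^2}u^\ell
\end{equation*}
after applying the Durfee rectangle identity to the inner $P$-sum. In the tadpole setting, each paired pair $(n_{2k-1},n_{2k})$ (odd case) or $(n_{2k-2},n_{2k-1})$ (even case) under the pairing condition is coupled to its neighbors via the off-diagonals $-n_i n_{i+1}$. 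I would process pairs iteratively, starting from the Euler-summed end and working back toward $k=1$, applying a coupled-pair analogue of the above identity at each step. The coupling terms, upon completion of squares, are absorbed as half-integer shifts into the theta argument, producing precisely $\sum_{\ell_k\in\mathbb{Z}}q^{(\ell_k-(n_{k-1}-n_k)/2)^2}x_{2k-1}^{-\ell_k}$ in (1.10) (and the analogue with $x_{2k-2}^{-\ell_k}$ in (1.9)). Each pair reduction contributes one factor of $1/(q;q)_\infty$, accounting for the prefactors $1/(q;q)_\infty^r$ and $1/(q;q)_\infty^{r-1}$.

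The main obstacle is the quadratic-form bookkeeping during the coupled pair reductions: the Durfee rectangle identity does not collapse the inner sum directly when a pair carries linear coupling terms from both sides, and one must carefully complete squares to transfer the coupling into the theta shift. I would structure the proof as an induction on $r$: each inductive step processes the outermost pair and reduces the remaining sum to a lower-rank tadpole Nahm sum with modified parameters; the telescoping of the theta shifts then produces the announced exponents $\frac{1}{4}\sum_{i=1}^r(n_i-n_{i-1})^2$ (odd) and $\frac{1}{2}n_1^2+\frac{1}{2}(n_1+n_2)^2+\frac{1}{4}\sum_{i=2}^r(n_i-n_{i+1})^2$ (even), as well as the monomial factors $\prod_j(x_{2j+1}/x_{2j-1})^{n_j}$ and $x_0^{-n_1}\prod_j(x_{2j}/x_{2j-2})^{n_{j+1}}$.
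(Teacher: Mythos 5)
Your opening move (summing out the last index by Euler's identity, peeling off $(-q^{\frac{1}{2}}x_m;q)_\infty$ via \eqref{aq-finite}, and expanding the resulting finite Pochhammer by the $q$-binomial theorem) coincides with the paper's first step, and your isolated-pair identity is correct: with $\ell=a-b$ and $P=a$ the Durfee rectangle collapses the inner sum and yields $\frac{1}{(q;q)_\infty}\sum_{\ell\in\mathbb{Z}}q^{\ell^2}u^{\ell}$. This already suffices for the case $r=1$ of \eqref{eq-reduce-odd}: after the split $n_2=i+k$ the exponent becomes $n_1^2+i^2-n_1i+ik+\frac{1}{2}k^2-n_1k$, the surviving index $k$ couples to the pair only through $\ell=n_1-i$, and the sum over $P=n_1$ collapses to $1/(q;q)_\infty$.

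However, the inductive step for $r\geq 2$ has a genuine gap, and it is exactly the one you flag as ``the main obstacle'' without resolving. Take $m=5$: after Euler on $n_5$ and the split $n_4=i_2+m_2$, the exponent is $n_1^2+n_2^2+n_3^2+i_2^2+i_2m_2+\frac{1}{2}m_2^2-n_1n_2-n_2n_3-n_3i_2-n_3m_2$. The pair $(n_3,i_2)$ carries the correct Durfee core $n_3^2+i_2^2-n_3i_2=P(P-\ell_2)+\ell_2^2$ with $\ell_2=n_3-i_2$, $P=n_3$, and the coupling $-n_3m_2+i_2m_2=-\ell_2m_2$ to the new index is harmless; but the coupling $-n_2n_3=-n_2P$ to the \emph{next} pair involves the Durfee index $P$ individually, so the inner sum becomes $\sum_{P}q^{P(P-\ell_2-n_2)}/\big((q;q)_P(q;q)_{P-\ell_2}\big)$, which is not a Durfee rectangle and does not collapse to $1/(q;q)_\infty$. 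No completion of squares can move a term linear in $P$ with an external coefficient into a shift of $\ell_2$, and exchanging the roles of $P$ and $\ell_2$, or reordering the pairs, only relocates the obstruction to the other neighbor; consequently what remains after one step is not a lower-rank tadpole sum and the induction does not close. The paper's proof avoids this entirely by a different device: every cross term $q^{\frac{1}{2}(n_j-n_{j+1})^2}$ is first rewritten as a constant term $\CT_{z}\big[\theta(z)z^{n_j-n_{j+1}}\big]$, so that all summation indices decouple and can be summed by Euler or the $q$-binomial theorem; the factors $1/(q;q)_\infty$ then arise from the Jacobi triple product rather than from Durfee rectangles, and the shifted thetas arise from completing the square in $\frac{1}{2}(i^2+j^2)$ under the constraint $i+j=n_k-n_{k+1}$ between two $\mathbb{Z}$-valued indices (identity \eqref{ij-id}). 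To repair your argument you would need either this constant-term decoupling or a coupled-pair identity strictly stronger than the Durfee rectangle; the proposal supplies neither.
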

These formulas reduce the calculations of a rank $2r$ (resp.\ $2r+1$) tadpole Nahm sum to the evaluation of some rank $r+1$ (resp.\ $r$) Nahm sums. In particular, the formula \eqref{eq-reduce-even} transforms the rank two tadpole Nahm sums to some other double Nahm sums, which does not reduce the rank. The formulas \eqref{eq-reduce-even} and \eqref{eq-reduce-odd} will be very helpful when the rank $r\geq 3$. With these formulas we can prove Conjecture \ref{conj-CMP} in a uniform way when $3\leq r\leq 5$. In particular, we are able to prove Conjecture \ref{conj-CMP} for the unsolved cases $r=4,5$ and obtain some companion identities.

To evaluate rank four tadpole Nahm sums, we establish the following new Rogers--Ramanujan type identities.
\begin{theorem}\label{thm-dual-Zagier}
We have
    \begin{align}
        &\sum_{n_1,n_2,n_3\geq0}\frac{q^{4n_1^2+4n_1n_2+3n_2^2-2n_2n_3+n_3^2}}{(q^4;q^4)_{n_1}(q^4;q^4)_{n_2}(q^4;q^4)_{n_3}}=\frac{(q^6;q^6)_{\infty}(q^{12};q^{12})_{\infty}}{(q^3;q^3)_{\infty}(q^8;q^8)_{\infty}(q,q^{11};q^{12})_{\infty}}, \label{dZ1} \\
        &\sum_{n_1,n_2,n_3\geq0}\frac{q^{4n_1^2+4n_1n_2+3n_2^2-2n_2n_3+n_3^2-2n_2+2n_3}}{(q^4;q^4)_{n_1}(q^4;q^4)_{n_2}(q^4;q^4)_{n_3}}=\frac{(q^{12};q^{12})_{\infty}}{(q^8;q^8)_{\infty}(q,q^{11};q^{12})_{\infty}(q^5,q^7;q^{12})_{\infty}},\label{dZ2} \\
        &\sum_{n_1,n_2,n_3\geq0}\frac{q^{4n_1^2+4n_1n_2+3n_2^2-2n_2n_3+n_3^2+2n_3}}{(q^4;q^4)_{n_1}(q^4;q^4)_{n_2}(q^4;q^4)_{n_3}}=\frac{(q^6;q^6)_{\infty}^5(q^8;q^8)_{\infty}}{(q^3;q^3)_{\infty}^2(q^4;q^4)_{\infty}^2(q^{12};q^{12})_{\infty}^2},\label{dZ3} \\
        &\sum_{n_1,n_2,n_3\geq0}\frac{q^{4n_1^2+4n_1n_2+3n_2^2-2n_2n_3+n_3^2+4n_1+2n_2+2n_3}}{(q^4;q^4)_{n_1}(q^4;q^4)_{n_2}(q^4;q^4)_{n_3}}=\frac{(q^6;q^6)_{\infty}(q^{12};q^{12})_{\infty}}{(q^3;q^3)_{\infty}(q^8;q^{8})_{\infty}(q^5,q^7;q^{12})_{\infty}}. \label{dZ4}
\end{align}
\end{theorem}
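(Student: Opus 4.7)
A key structural observation is that the quadratic form common to \eqref{dZ1}--\eqref{dZ4} decomposes as a sum of three squares,
\[
4n_1^2 + 4n_1n_2 + 3n_2^2 - 2n_2n_3 + n_3^2 = (2n_1+n_2)^2 + n_2^2 + (n_2-n_3)^2,
\]
and the four identities differ only in the linear shift in the exponent. Accordingly, I would package all four into a single parameterized generating series
\[
G(z,x,y)=\sum_{n_1,n_2,n_3\geq 0}\frac{q^{(2n_1+n_2)^2+n_2^2+(n_2-n_3)^2}\,z^{n_1}x^{n_2}y^{n_3}}{(q^4;q^4)_{n_1}(q^4;q^4)_{n_2}(q^4;q^4)_{n_3}},
\]
from which \eqref{dZ1}--\eqref{dZ4} are recovered by specializing $(z,x,y)$ to $(1,1,1)$, $(1,q^{-2},q^2)$, $(1,1,q^2)$, and $(q^4,q^2,q^2)$ respectively; this also suggests organizing the proof around the change of summation variables $m_1=2n_1+n_2$, $m_2=n_2$, $m_3=n_2-n_3$ so that each square lives in its own variable, at the price of parity and domain constraints that must be tracked carefully.

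The plan is to peel off the summations one at a time using $q$-series transformations at base $q^4$. For fixed $n_1,n_2$ the innermost sum
\[
\sum_{n_3\geq 0}\frac{q^{n_3^2-2n_2n_3}y^{n_3}}{(q^4;q^4)_{n_3}}
\]
does not fit Rogers' identity \eqref{Rogers-id} directly because of the coupling $-2n_2n_3$; I would first complete the square $n_3^2-2n_2n_3=(n_3-n_2)^2-n_2^2$, then either apply a Heine-type $_2\phi_1$ transformation at base $q^4$ or invoke an appropriate Bailey pair relative to $q^4$, so as to express the result as a theta factor whose argument carries $n_2$ times a clean $q$-Pochhammer tail. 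Next, the sum
\[
\sum_{n_1\geq 0}\frac{q^{4n_1^2+4n_1n_2}z^{n_1}}{(q^4;q^4)_{n_1}}
\]
is a Rogers--Ramanujan-type series in the base $\tilde q=q^4$ with parameter $n_2$, which I would again reshape into a theta-times-product form by the same Bailey machinery. After these two reductions the identity collapses to a single sum over $n_2$ weighted by the Gaussian $q^{3n_2^2+\text{linear}}$ together with two theta factors inherited from the $n_1$- and $n_3$-evaluations; the desired eta quotient is then extracted by Jacobi's triple product and the quintuple product identity, and the four specializations of $(z,x,y)$ must be matched against the explicit right-hand sides of \eqref{dZ1}--\eqref{dZ4}, including the somewhat different shape $(q^6;q^6)_\infty^5/((q^4;q^4)_\infty^2(q^{12};q^{12})_\infty^2)$ appearing in \eqref{dZ3}.

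The principal obstacle is precisely the cross term $-2n_2n_3$, which prevents an immediate factorization of the triple sum into three independent one-variable sums; the decisive step is the first transformation at base $q^4$ that replaces this coupling by a shift of a theta argument. A secondary difficulty is the linear shift $+4n_1$ in \eqref{dZ4}, which breaks the symmetry in $n_1$ of the first three identities and forces a different branch of the Bailey transform. If no single $_2\phi_1$/Bailey-lattice step handles all four linear shifts uniformly, my fallback is to establish a finite polynomial refinement of each identity (truncating the summations at some $N$), verify the resulting three-term recurrence in $N$ by matching initial values, and then pass to the $q$-series limit $N\to\infty$ to recover \eqref{dZ1}--\eqref{dZ4}.
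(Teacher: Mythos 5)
Your reduction of the quadratic form to $(2n_1+n_2)^2+n_2^2+(n_2-n_3)^2$ and the four specializations of $(z,x,y)$ are correct, but the central analytic step of your plan does not exist, and this is a genuine gap rather than a detail. After completing the square, the inner sum $\sum_{n_3\ge 0}q^{(n_3-n_2)^2}y^{n_3}/(q^4;q^4)_{n_3}$ is one-sided and carries the Pochhammer denominator, so it is not a theta function, and there is no Heine/$_2\phi_1$ or Bailey-pair step that rewrites it as ``a theta factor whose argument carries $n_2$ times a clean Pochhammer tail.'' The same problem recurs, more visibly, at your second step: $\sum_{n_1\ge 0}Q^{n_1^2+n_1n_2}z^{n_1}/(Q;Q)_{n_1}$ with $Q=q^4$ is the Rogers--Ramanujan series with integer shift parameter $n_2$, and for shifts other than $0$ and $1$ (cf.\ \eqref{RR}, \eqref{Rogers-id}) it is \emph{not} an infinite product or theta quotient; it only satisfies a contiguous recurrence in $n_2$. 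So the promised collapse to a single $n_2$-sum with two theta factors never happens, and the quintuple-product finish has nothing to act on. You correctly identify the cross terms as the obstruction, but the proposed resolution is a placeholder, not a method. The fallback via polynomial refinements is likewise undeveloped: a truncation of a triple sum does not obviously satisfy a three-term recurrence in a single cutoff $N$, and one would also need a polynomial refinement of each eta-quotient right-hand side.

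For comparison, the paper overcomes exactly this obstruction by a constant-term device: writing $q^{\frac12(m+n)^2}$ as the coefficient extraction $\CT_z[z^{m+n}\sum_i q^{i^2/2}z^{-i}]$ and invoking the Jacobi triple product turns the $n_1$--$n_2$ coupling into an \emph{alternating} triple sum (a sign $(-1)^m$ appears from expanding $1/(-q^{1/2-a}/z;q)_\infty$) multiplied by an external theta series that depends only on the parity of $n-m$; see \eqref{Z-CT}--\eqref{Z456-split}. The resulting sums $G_k,\widetilde G_k$ have quadratic form $(n-m)^2+(n-r)^2$, so Euler's identity \eqref{euler-2} applied to the \emph{middle} variable produces factors $(-q^{2+b_k-2m-2r};q^4)_\infty$ which truncate by \eqref{aq-finite}; the diagonal sums over $m+r=n$ then collapse via the finite $q$-binomial identities \eqref{finite-sum-1}--\eqref{finite-sum-11}, and the surviving single sums are evaluated by Heine's transformation \eqref{Heine} together with Slater-list identities, before a final theta-function verification. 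Some such decoupling of the cross terms (or an equivalent Bailey-chain argument worked out in full) is indispensable; without it your outline cannot be completed.
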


We also find a companion identity which we are not able to prove at this stage, and we leave it as an open problem.
\begin{conj}\label{conj-dZ5}
We have
\begin{align}
 &\sum_{n_1,n_2,n_3\geq0}\frac{q^{4n_1^2+4n_1n_2+3n_2^2-2n_2n_3+n_3^2+4n_1+2n_2}}{(q^4;q^4)_{n_1}(q^4;q^4)_{n_2}(q^4;q^4)_{n_3}}=\frac{(q^6;q^6)_{\infty}(q^8;q^8)_{\infty}(q^2,q^{10};q^{12})_{\infty}}{(q^4;q^4)_{\infty}^2(q,q^{11};q^{12})_{\infty}(q^5,q^7;q^{12})_{\infty}}. \label{dZ5}
 \end{align}
\end{conj}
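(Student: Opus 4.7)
The plan is to attack \eqref{dZ5} along the same lines that establish the four identities \eqref{dZ1}--\eqref{dZ4}, since all five share the same Gram matrix and differ only by the linear term. After replacing $q$ by $q^4$ one may view the five left-hand sides as a vector-valued Nahm-type function indexed by a fixed residue class, and one hopes that the proof strategy used for the proven four identities is uniform enough to include the fifth.

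The concrete first step is to complete the square in $n_3$ via the decomposition $3n_2^2 - 2n_2 n_3 + n_3^2 = 2n_2^2 + (n_3 - n_2)^2$. The shift $n_3 \mapsto n_3 + n_2$, with the constraint $n_3 \geq -n_2$, turns the inner sum into a partial theta in base $q^4$. In the four proven cases the linear term in $(n_2,n_3)$ aligns the center of this partial theta so that extending the summation to all of $\mathbb{Z}$ introduces a controllable correction, after which Jacobi's triple product collapses the $n_3$-sum. The resulting double sum in $(n_1,n_2)$ should then be accessible by an appropriate Bailey pair, or by a further round of the same reduction, yielding the product on the right.

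An alternative route is to invoke the rank-reduction formula \eqref{eq-reduce-even} in reverse: one seeks a specialization of $(x_1,x_2,x_3,x_4)$ in a rank-four tadpole Nahm sum $\chi_4(x_1,\ldots,x_4;q)$, whose modularity is a consequence of the main results of this paper, that produces the left-hand side of \eqref{dZ5}. If such a specialization can be identified, \eqref{dZ5} would follow from the rank-four instance of Conjecture \ref{conj-CMP} proved here, modulo a finite modular-forms identification of the resulting product on level $24$.

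The main obstacle, and presumably the reason \eqref{dZ5} has resisted proof, is that the linear vector $B = (4,2,0)$ carries no $n_3$ component, so completing the square in $n_3$ leaves a genuine partial theta that does not extend cleanly to a bilateral theta. The promising workaround is to combine \eqref{dZ5} with \eqref{dZ4} (whose linear part $(4,2,2)$ is complementary in exactly this respect), arranging so that the two partial-theta obstructions cancel. This would reduce the conjecture to a theta identity on $\Gamma_0(24)$ that should be verifiable by comparing finitely many Fourier coefficients. The delicate point, and where I expect most of the work to concentrate, is showing that the cancellation is exact rather than approximate, since the two sums have distinct $q^4$-adic ``centers'' after completing the square.
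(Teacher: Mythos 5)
There is a fundamental mismatch here: the paper does not prove \eqref{dZ5} at all --- it is explicitly stated as Conjecture \ref{conj-dZ5} and left open --- and your proposal is likewise not a proof but a list of three candidate strategies, none of which is carried out. Your first route (complete the square via $3n_2^2-2n_2n_3+n_3^2=2n_2^2+(n_3-n_2)^2$, so that the exponent becomes $2n_1^2+2(n_1+n_2)^2+(n_2-n_3)^2+4n_1+2n_2$, then run the constant-term/theta machinery) is exactly the method the authors use for \eqref{dZ1}--\eqref{dZ4}, and they explain in the Remark following the proof of Theorem \ref{thm-dual-Zagier} precisely why it fails for this case: with $\alpha_5=(1,1/2,0)^\mathrm{T}$ the constant-term extraction in \eqref{Z-CT} requires the annulus $|q|^{1/2-a}<|z|<|q|^{-b-c}$ to be nonempty, which forces $a-b-c<1/2$; here $a-b-c=1/2$, the annulus is empty, and the resulting triple sum does not converge absolutely. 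Your diagnosis that the linear vector ``carries no $n_3$ component'' points at the same obstruction, but you offer no way around it.

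Your second route is circular. The only rank-four modularity statement in the paper whose reduction produces the left side of \eqref{dZ5} is \eqref{eq-4-5}, i.e.\ $\chi_4(q^4,q^{-4},q^4,q^{-2};q^4)$ via \eqref{chi4-5-final}, and that identity is proved only \emph{assuming} Conjecture \ref{conj-dZ5}; the unconditional cases \eqref{eq-4-1}--\eqref{eq-4-4} correspond to $\alpha_1,\dots,\alpha_4$ and do not specialize to \eqref{dZ5}. So you cannot invoke ``the rank-four instance of Conjecture \ref{conj-CMP} proved here'' to derive \eqref{dZ5}. Your third route (pairing \eqref{dZ5} with \eqref{dZ4} so that the two partial-theta obstructions cancel) is the only genuinely new suggestion, but it is entirely unsubstantiated: you would need to exhibit the exact linear combination in which the divergent or non-bilateral pieces cancel, and you concede yourself that the two sums have distinct centers after completing the square. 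As it stands, no step of the argument is completed, and the central analytic obstacle identified by the authors remains untouched.
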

Note that the matrix and vector parts for the Nahm sums involved in \eqref{dZ1}--\eqref{dZ5} are
\begin{align}
    A=\begin{pmatrix}
    2&1&0\\1& {3}/{2}& -{1}/{2}\\ 0&-{1}/{2}& {1}/{2}
\end{pmatrix}, \quad B\in \left\{\begin{pmatrix}
    0 \\ 0 \\ 0
\end{pmatrix}, \begin{pmatrix} 0 \\ -{1}/{2} \\ {1}/{2} \end{pmatrix}, \begin{pmatrix} 0 \\ 0 \\ 1/2 \end{pmatrix}, \begin{pmatrix} 1 \\ 1/2 \\ 1/2 \end{pmatrix}, \begin{pmatrix} 1 \\ 1/2 \\ 0 \end{pmatrix} \right\}.
\end{align}
This is dual in the sense of \eqref{eq-dual} to Zagier's eleventh rank three example \cite[Table 3]{Zagier}:
\begin{align}
A^{-1}=\begin{pmatrix}
    1&-1&-1\\-1&2&2\\-1&2&4
\end{pmatrix}, \quad B\in \left\{\begin{pmatrix}
    0 \\ 0 \\ 0
\end{pmatrix}, \begin{pmatrix} 0 \\ 0 \\ 1 \end{pmatrix}, \begin{pmatrix} -1/2 \\ 1 \\ 2 \end{pmatrix}, \begin{pmatrix} 0 \\ 1 \\ 2 \end{pmatrix} , \begin{pmatrix}
    1/2 \\ 0 \\ 0
\end{pmatrix} \right\}.
\end{align}
Its modularity was proved by Wang \cite[Theorem 4.14]{Wang-rank3}. Theorem \ref{thm-dual-Zagier} together with Conjecture \ref{conj-dZ5} confirms the modularity of its dual triples for the first time.

Based on Theorem \ref{thm-dual-Zagier} and Conjecture \ref{conj-dZ5}, we prove five modular cases of generalized tadpole Nahm sums in the rank four case. To make our formulas more compact, we use the following notation:
\begin{align}
    J_m:=(q^m;q^m)_{\infty},\quad J_{a,m}:=(q^a,q^{m-a},q^m;q^m)_{\infty}.
\end{align}
\begin{theorem}\label{thm-rank4}
We have
\begin{align}
        \chi_4(1,1,1,1;q^2)&=\frac{(-q;q^2)_{\infty}}{(q^2;q^2)_{\infty}}(\frac{J_4^5J_{12}J_{4,24}J_{10,24}}{J_1J_2^2J_8^2J_{24}^2}+2q\frac{J_8^2J_{12}J_{5,12}J_{2,24}^2J_{10,24}}{J_1J_4J_{1,12}J_{4,24}J_{24}^2}), \label{eq-4-1}\\
        \chi_4(1,q^{-2},q^2,1;q^2)&=\frac{(-q;q^2)_{\infty}}{(q^2;q^2)_{\infty}}\left(\frac{J_4^5J_{24}^3}{J_1J_2J_8^3J_{2,24}J_{10,24}}+2\frac{J_2J_8J_{24}^3}{J_1J_4J_{4,24}^2}\right), \label{eq-4-2}\\
        \chi_4(1,1,1,q;q^2)&=\frac{(-q^2;q^2)_{\infty}}{(q^2;q^2)_{\infty}}\left(\frac{J_4^5J_{12}^6}{J_2^3J_6^3J_8^2J_{2,24}J_{10,24}}+4q^2\frac{J_8^2J_{24}^4}{J_2J_4J_6J_{2,24}J_{10,24}}\right),  \label{eq-4-3} \\
\chi_4(q^2,q^{-2},q^2,1;q^2)&=\frac{(-q;q^2)_{\infty}}{(q^2;q^2)_{\infty}}\left(q\frac{J_4^5J_{24}^3}{J_1J_2J_8^3J_{6,24}J_{10,24}}+2\frac{J_8^3J_{12}^3J_{24}^2J_{10,24}}{J_3J_4^3J_{5,24}^2J_{7,24}^2}\right). \label{eq-4-4}
\end{align}
Assume Conjecture \ref{conj-dZ5} we also have
\begin{align}
\chi_4(q^2,q^{-2},q^2,q^{-1};q^2)&=\frac{(-1;q^2)_{\infty}}{(q^2;q^2)_{\infty}}\left(\frac{J_4^5J_{12}^2J_{24}^7}{J_2^2J_6^2J_8^3J_{2,24}^3J_{10,24}^3}+2\frac{J_8^3J_{12}^2}{J_2^2J_4J_{24}}\right). \label{eq-4-5}
\end{align}
\end{theorem}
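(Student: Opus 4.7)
The plan is to apply the rank-reduction formula \eqref{eq-reduce-even} of Theorem~\ref{thm-rank-reduction} with $r=2$ (after replacing $q$ by $q^2$) to each of the five $\chi_4$ specializations, and then to invoke Theorem~\ref{thm-dual-Zagier}---and Conjecture~\ref{conj-dZ5} in the last case---to evaluate the resulting triple sums as infinite products.

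First, I would verify the hypothesis $x_{2i}x_{2i+1}=1$ of \eqref{eq-reduce-even} by taking $x_0=1/x_1$, which gives $x_0=1$ for \eqref{eq-4-1}--\eqref{eq-4-3} and $x_0=q^{-2}$ for \eqref{eq-4-4}--\eqref{eq-4-5}; the companion constraint $x_2x_3=1$ holds by inspection in all five cases. Applying \eqref{eq-reduce-even} with $q\mapsto q^2$ expresses each $\chi_4(\ldots;q^2)$ as the prefactor $(-qx_4;q^2)_\infty/(q^2;q^2)_\infty$ times a triple sum over $n_1,n_2,n_3\geq 0$ with $(q^2;q^2)_{n_i}$ denominators, weighted by $q^{\,n_1^2+(n_1+n_2)^2+(n_2-n_3)^2/2}$ and a monomial $\omega(n_1,n_2,n_3)$ in $x_0,x_2,x_4$, multiplied by the inner theta sum $\sum_{\ell\in\mathbb{Z}}q^{2(\ell-(n_2-n_3)/2)^2}x_2^{-\ell}$. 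Completing the square in $\ell$ rewrites this inner sum as $q^{P(n_2,n_3)}\sum_{\ell}q^{2(\ell+b)^2}$, where $P$ is linear and the integrality of $b$ depends on the parity of $n_2-n_3$ (and on whether $x_2=1$ or $x_2=q^{-2}$); the two parity classes accordingly produce the two terms on the right-hand sides of \eqref{eq-4-1}--\eqref{eq-4-5}, and each of $\sum_{\ell}q^{2\ell^2}$ and $\sum_{\ell}q^{2(\ell+1/2)^2}$ is then evaluated by the Jacobi triple product.

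After the theta evaluation, the residual triple sum---with quadratic form $2n_1^2+2n_1n_2+\tfrac{3}{2} n_2^2-n_2n_3+\tfrac{1}{2} n_3^2$ plus a linear term assembled from $\omega$ and $P$---matches, up to the substitution $q\mapsto q^{1/2}$, one of the five Nahm-sum evaluations in Theorem~\ref{thm-dual-Zagier} and Conjecture~\ref{conj-dZ5}: this substitution converts the $(q^4;q^4)_{n_i}$ appearing there into our $(q^2;q^2)_{n_i}$ and halves the quadratic form $4n_1^2+4n_1n_2+3n_2^2-2n_2n_3+n_3^2$ to ours. Matching the linear-shift data to the five $B$-vectors listed immediately below \eqref{dZ5} then identifies which of \eqref{dZ1}--\eqref{dZ5} is invoked in each case. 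Collecting the prefactor, the theta contribution for each parity class, and the $q\mapsto q^{1/2}$ specialization of the right-hand sides of \eqref{dZ1}--\eqref{dZ5} yields a two-term infinite-product expression for each $\chi_4$. The main obstacle is this final simplification: the half-integer bases produced by $q\mapsto q^{1/2}$ must recombine with the Jacobi-triple-product factors coming from the $\ell$-sum into the integer-base $J_m$ and $J_{a,m}$ of \eqref{eq-4-1}--\eqref{eq-4-5}. This is a finite, bounded calculation but requires systematic dissections of Pochhammer symbols together with repeated use of the Jacobi triple product; case \eqref{eq-4-5} is moreover conditional on Conjecture~\ref{conj-dZ5}.
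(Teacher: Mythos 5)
Your proposal follows essentially the same route as the paper: the $r=2$ case of \eqref{eq-reduce-even}, a split according to the parity of $n_2-n_3$, evaluation of the resulting theta sums via \eqref{theta0-defn}--\eqref{theta1-defn}, identification of the triple sums with those of Theorem \ref{thm-dual-Zagier} (and Conjecture \ref{conj-dZ5}), and a final round of product manipulations. The only point you leave implicit is that the parity-restricted triple sums are not themselves the Nahm sums of Theorem \ref{thm-dual-Zagier} but their $2$-dissections $\tfrac12\big(Z_k(q)\pm Z_k(-q)\big)$, which is exactly how the paper recovers them (see \eqref{Zk01-exp} and \eqref{chi4-1-final}--\eqref{chi4-5-final}) before verifying the resulting theta-function identities by the method of Frye--Garvan.
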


Meanwhile, we find nine identities for the generalized tadpole Nahm sums in the rank five case (see Theorem \ref{thm-rank5}). For instance, we prove that
\begin{align}\label{intro-5-1}
   \chi_5(1,1,1,1,1;q^4)&=4q^2\frac{J_8^6J_{8,28}J_{12,56}}{J_2J_4^5J_8J_{56}}-2q\frac{J_4J_8J_{6,28}J_{16,56}}{J_2^3J_{56}}\nonumber\\&+\frac{1}{2}\frac{J_2^7J_{6,28}J_{16,56}}{J_1^4J_4J_8^3J_{56}}+\frac{1}{2}\frac{J_2^5J_{28}^2J_{12,56}J_{16,56}}{J_4^5J_8J_{56}^2J_{6,28}}.
\end{align}
As a consequence of \eqref{eq-4-1} and \eqref{intro-5-1}, we immediately obtain the following
\begin{corollary}
Conjecture \ref{conj-CMP} holds when $r=4,5$.
\end{corollary}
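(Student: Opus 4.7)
The plan is to derive the corollary directly from the closed-form evaluations \eqref{eq-4-1} and \eqref{intro-5-1}, by observing that their right-hand sides are manifestly modular up to a rational power of $q$. Since $\chi_4(1,\dots,1;q)$ and $\chi_5(1,\dots,1;q)$ differ from $\chi_4(1,\dots,1;q^2)$ and $\chi_5(1,\dots,1;q^4)$ respectively only by the substitution $\tau\mapsto\tau/2$ (resp.\ $\tau\mapsto\tau/4$) in $q=e^{2\pi i\tau}$, modularity on the level of $q^2$ (resp.\ $q^4$) transfers, by conjugating the underlying congruence subgroup, to modularity of $q^a\chi_r(1,\dots,1;q)$ for some $a\in\mathbb{Q}$.

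First I would recall the standard modular interpretations of the building blocks $J_m$ and $J_{a,m}$. With $q=e^{2\pi i\tau}$, one has $\eta(m\tau) = q^{m/24}J_m$, and by Jacobi's triple product
\begin{align*}
J_{a,m} = \sum_{n\in\mathbb{Z}}(-1)^n q^{m\binom{n}{2}+an},
\end{align*}
which, after multiplication by a suitable rational power of $q$, is a theta series and hence a weight $1/2$ modular form on some congruence subgroup. It follows that each individual eta-theta quotient appearing in the parenthetical sums of \eqref{eq-4-1} and \eqref{intro-5-1}, together with the overall prefactor $(-q;q^2)_{\infty}/(q^2;q^2)_{\infty}$ in \eqref{eq-4-1}, becomes a weakly holomorphic modular function on some $\Gamma_0(N)$ after multiplication by a specific rational power of $q$.

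The main step is then a mechanical exponent count: for each summand on the right-hand side of \eqref{eq-4-1} (two terms) and of \eqref{intro-5-1} (four terms), tally the contribution to the rational $q$-exponent coming from normalizing each $J_m$ to $\eta(m\tau)$ and each $J_{a,m}$ to a theta series. I expect all summands within each identity to require exactly the same overall shift, so that a single prefactor $q^{a_r'}$ turns the whole right-hand side into a modular function; this compatibility is the only point that requires verification and constitutes the main (minor) obstacle. Once this is checked, applying $\tau\mapsto\tau/2$ to \eqref{eq-4-1} and $\tau\mapsto\tau/4$ to \eqref{intro-5-1}, and replacing $a_r'$ by $a_r=a_r'/2$ (resp.\ $a_r'/4$), yields the modularity of $q^{a_r}\chi_r(1,\dots,1;q)$ for $r=4,5$, thereby confirming Conjecture \ref{conj-CMP} in these two cases.
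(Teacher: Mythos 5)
Your proposal is correct and takes essentially the same route as the paper: the corollary is deduced immediately from the closed forms \eqref{eq-4-1} and \eqref{intro-5-1}, whose right-hand sides are (generalized) eta/theta quotients and hence modular after multiplication by a single rational power of $q$, with the required exponents recorded in Tables \ref{tab-rank4} and \ref{tab-rank5} ($C=-1/4$ for $\chi_4(1,1,1,1;q^2)$ and $C=-55/84$ for $\chi_5(1,\dots,1;q^4)$). The compatibility check and the rescaling $\tau\mapsto\tau/2$, $\tau\mapsto\tau/4$ that you describe are exactly the implicit steps the paper relies on.
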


The rest of this paper is organized as follows. In Section \ref{sec-pre} we collect some auxiliary results. In particular, we discuss convergence conditions for some infinite series under which we can interchange the order of summation. We establish the rank reduction formulas \eqref{eq-reduce-even} and \eqref{eq-reduce-odd} in Section \ref{sec-rank-reduction}. We prove Theorems \ref{thm-dual-Zagier} and \ref{thm-rank4} in Section \ref{sec-rank4}. Modular representations for the rank five tadpole Nahm sums will be given in Section \ref{sec-rank5}. Finally, we give an unified new proof for the case of rank three in Section \ref{sec-remark} and point out the difficulty for higher rank cases.

\section{Preliminaries}\label{sec-pre}
The $q$-binomial theorem \cite[Theorem 2.1]{Andrews-book} asserts that
\begin{align}
    \sum_{n=0}^{\infty}\frac{(a;q)_n}{(q;q)_n}z^n=\frac{(az;q)_{\infty}}{(z;q)_{\infty}},\quad \abs{z}<1.\label{qbi}
\end{align}
As consequences, we have Euler's $q$-exponential identities \cite[Corollary 2.2]{Andrews-book} given by \eqref{euler-2} and
\begin{align}
    \sum_{n=0}^{\infty} \frac{z^{n}}{(q ; q)_{n}}=\frac{1}{(z ; q)_{\infty}}, \quad |z|<1. \label{euler}
\end{align}
The $q$-binomial coefficient or Gaussian coefficient is defined as
$${n\brack m}={n \brack m}_q:=\left\{\begin{array}{ll}
\frac{(q;q)_n}{(q;q)_m (q;q)_{n-m}}, & 0\leq m \leq n, \\
0, & \text{otherwise}. \end{array}\right.$$
As a finite version of \eqref{euler-2}, we have \cite[Theorem 3.3]{Andrews-book}
\begin{align}\label{eq-finite}
(-z;q)_n=\sum_{i=0}^n { n\brack i}z^i q^{i(i-1)/2}.
\end{align}

Recall the Jacobi triple product identity \cite[Theorem 2.8]{Andrews-book}
\begin{align}\label{JTP}
\theta(z):=\sum_{n=-\infty}^\infty q^{\frac{1}{2}n^2}z^{n}=(-q^{1/2}z,-q^{1/2}/z,q;q)_\infty.
\end{align}
As special instances, we have Ramanujan's functions \cite[Corollary 2.10]{Andrews-book}
\begin{align}
    \varphi(q):=\sum_{n=-\infty}^\infty q^{n^2}=\frac{J_2^5}{J_1^2J_4^2}, \quad
    \psi(q):=\sum_{n=0}^\infty q^{n(n+1)/2}=\frac{J_2^2}{J_1}. \label{eq-psi}
\end{align}
It is easy to check that
\begin{align}
   \theta_0=\theta_0(q)&:=\sum_{i=-\infty}^\infty q^{(2i)^2}=\frac{1}{2}\left(\varphi(q)+\varphi(-q)\right)=\varphi(q^4)=\frac{J_8^5}{J_4^2J_{16}^2}, \label{theta0-defn} \\
\theta_1=\theta_1(q)&:=\sum_{i=-\infty}^\infty q^{(2i+1)^2}= \frac{1}{2}\left(\varphi(q)-\varphi(-q)\right)= 2q\psi(q^8)=2q\frac{J_{16}^2}{J_8}. \label{theta1-defn}
\end{align}
We also need Heine's transformation formula \cite[Corollary 2.4]{Andrews-book}:
\begin{align}
    \sum_{n=0}^{\infty}\frac{(a,b;q)_nt^n}{(c,q;q)_n}=\frac{(b,at;q)_{\infty}}{(c,t;q)_{\infty}}\sum_{n=0}^{\infty}\frac{(c/b,t;q)_nb^n}{(at,q;q)_n}.\label{Heine}
\end{align}

We will frequently use the fact: for $n\geq 0$,
\begin{align}\label{aq-finite}
    (aq^{-n};q)_\infty=(-a)^nq^{-n(n+1)/2}(a;q)_\infty (q/a;q)_n.
\end{align}
As some special instances of \eqref{aq-finite}, we have
\begin{align}
&(q^{-i};q)_\infty=0, \quad i\geq 0, \label{finite-1}\\
&(q^{-i-1/2};q)_\infty=(-1)^{i+1}q^{-(i+1)^2/2}(q^{1/2};q)_\infty (q^{1/2};q)_{i+1}, \label{finite-2}\\
&(q^{1/2-i};q)_\infty=(-1)^iq^{-i^2/2}(q^{1/2};q)_\infty(q^{1/2};q)_i, \label{finite-add} \\
    &(-q^{-i};q)_{\infty}=2q^{-(i^2+i)/2}(-q;q)_{\infty}(-q;q)_i, \label{finite-3}\\
    &(-q^{-i-1/2};q)_{\infty}=q^{-(i+1)^2/2}(-q^{1/2};q)_{\infty}(-q^{1/2};q)_{i+1}, \label{finite-4}\\
    &(-q^{1-i};q)_{\infty}=\begin{cases}(-q;q)_{\infty}& \text{if}\ i=0\\2q^{-(i^2-i)/2}(-q;q)_{\infty}(-q;q)_{i-1}& \text{if} \ i\geq1 \\\end{cases}, \label{finite-5}\\
    &(-q^{1/2-i};q)_{\infty}=q^{-i^2/2}(-q^{1/2};q)_{\infty}(-q^{1/2};q)_i. \label{finite-6}
\end{align}

Replacing $z$ by $zq^{(1-n)/2}$ in \eqref{eq-finite}, we obtain
    \begin{align}\label{eq-lemma-sum}
        \sum_{\substack{i,k\geq0 \\ i+k=n}}\frac{q^{-\frac{ik}{2}}z^k}{(q;q)_i(q;q)_k}=\frac{1}{(q;q)_n}\cdot\frac{(-zq^{\frac{1-n}{2}};q)_{\infty}}{(-zq^{\frac{1+n}{2}};q)_{\infty}}.
    \end{align}
Using \eqref{eq-lemma-sum} with some specific $z$, we obtain the following identities which will be used in the proof of Theorem \ref{thm-dual-Zagier}:
\begin{align}
    \sum_{\substack{n,r\geq0\\n+r=2i}}\frac{q^{-\frac{nr}{2}+\frac{n}{2}}(-1)^n}{(q;q)_n(q;q)_r}&= \begin{cases}1& \text{if}\ i=0 \\0& \text{if}\ i\geq1  \end{cases}, \label{finite-sum-1}\\
    \sum_{\substack{n,r\geq0\\n+r=2i+1}}\frac{q^{-\frac{nr}{2}+\frac{n}{2}}(-1)^n}{(q;q)_n(q;q)_r}&=\frac{(-1)^iq^{-\frac{i^2}{2}}(q^{\frac{1}{2}};q)_i(q^{\frac{1}{2}};q)_{i+1}}{(q;q)_{2i+1}}, \label{finite-sum-2}\\
    \sum_{\substack{n,r\geq0\\n+r=2i}}\frac{q^{-\frac{nr}{2}+\frac{n}{2}}}{(q;q)_n(q;q)_r}&=q^{-\frac{i^2-i}{2}}\frac{(-1;q)_i(-q;q)_i}{(q;q)_{2i}}, \label{finite-sum-3}\\
    \sum_{\substack{n,r\geq0\\n+r=2i+1}}\frac{q^{-\frac{nr}{2}+\frac{n}{2}}}{(q;q)_n(q;q)_r}&=q^{-\frac{i^2}{2}}\frac{(-q^{\frac{1}{2}};q)_i(-q^{\frac{1}{2}};q)_{i+1}}{(q;q)_{2i+1}}, \label{finite-sum-4}\\
    \sum_{\substack{n,r\geq0\\n+r=2i}}\frac{q^{-\frac{nr}{2}}(-1)^n}{(q;q)_n(q;q)_r}&=(-1)^iq^{-\frac{i^2}{2}}\frac{(q^{\frac{1}{2}};q)_i^2}{(q;q)_{2i}},  \label{finite-sum-5}\\
    \sum_{\substack{n,r\geq0\\n+r=2i+1}}\frac{q^{-\frac{nr}{2}}(-1)^n}{(q;q)_n(q;q)_r}&=0,  \label{finite-sum-6} \\
     \sum_{\substack{n,r\geq0\\n+r=2i}}\frac{q^{-\frac{nr}{2}}}{(q;q)_n(q;q)_r}&=q^{-\frac{i^2}{2}}\frac{(-q^{\frac{1}{2}};q)_i^2}{(q;q)_{2i}},  \label{finite-sum-6-add}\\
    \sum_{\substack{n,r\geq0\\n+r=2i+1}}\frac{q^{-\frac{nr}{2}}}{(q;q)_n(q;q)_r}&=2q^{-\frac{i(i+1)}{2}}\frac{(-q;q)_i^2}{(q;q)_{2i+1}},  \label{finite-sum-7} \\
    \sum_{\substack{n,r\geq0\\n+r=2i}}\frac{q^{-\frac{nr}{2}-\frac{n}{2}+\frac{r}{2}}(-1)^n}{(q;q)_n(q;q)_r}&= \begin{cases}1& \text{if}\ i=0 \\ (-1)^{i+1}q^{-\frac{i^2+1}{2}}\frac{(q^{\frac{1}{2}};q)_{i-1}(q^{\frac{1}{2}};q)_{i+1}}{(q;q)_{2i}}&
\text{if} \ i\geq 1\end{cases},  \label{finite-sum-8} \\
    \sum_{\substack{n,r\geq0\\n+r=2i}}\frac{q^{-\frac{nr}{2}-\frac{n}{2}}(-1)^n}{(q;q)_n(q;q)_r}&= \begin{cases}1& \text{if}\ i=0 \\ 0& \text{if}\ i\geq 1 \\ \end{cases}, \label{finite-sum-9} \\
    \sum_{\substack{n,r\geq0\\n+r=2i+1}}\frac{q^{-\frac{nr}{2}-\frac{n}{2}}(-1)^n}{(q;q)_n(q;q)_r}&=(-1)^{i+1}q^{-\frac{(i+1)^2}{2}}\frac{(q^{\frac{1}{2}};q)_i(q^{\frac{1}{2}};q)_{i+1}}{(q;q)_{2i+1}}, \label{finite-sum-10} \\
    \sum_{\substack{n,r\geq0\\n+r=2i+1}}\frac{q^{-\frac{nr}{2}-\frac{n}{2}}}{(q;q)_n(q;q)_r}&=q^{-\frac{(i+1)^2}{2}}\frac{(-q^{\frac{1}{2}};q)_i(-q^{\frac{1}{2}};q)_{i+1}}{(q;q)_{2i+1}}. \label{finite-sum-11}
\end{align}

Recall the following known Rogers--Ramanujan type identities:
\begin{align}
 &\sum_{n=0}^\infty \frac{(-q;q^2)_nq^{n^2}}{(q^4;q^4)_n}=\frac{J_2J_3^2}{J_1J_4J_6}, \quad\text{({S.\ 25})}\label{S.25}\\
  &\sum_{n=0}^\infty\frac{(-q;q^2)_nq^{n^2+2n}}{(q^4;q^4)_n}=\frac{J_6^2}{J_3J_4},\quad   \text{(Ramanujan \cite[Entry 4.2.11]{LostNotebook2})}\label{Ramanujan[10 Entry4.2.11]}\\
     &\sum_{n=0}^\infty \frac{(-q^2;q^2)_nq^{n(n+1)}}{(q;q)_{2n+1}}=\frac{J_3J_{12}}{J_1J_6}, \quad  \text{(S.\ 28)}\label{S.28} \\
    &\sum_{n=0}^\infty \frac{(-q;q^2)_nq^{n^2}}{(q;q)_{2n}}=\frac{J_6^2}{J_1J_{12}}, \quad \text{({S.\ 29})}\label{S.29}\\
    &\sum_{n=0}^\infty \frac{(-q;q^2)_nq^{n^2+2n}}{(q;q)_{2n+1}}=\frac{J_2J_{12}^2}{J_1J_4J_6}, \quad\text{({S.\ 50})}\label{S.50}\\
    &\sum_{n=0}^\infty \frac{(-q^2;q^2)_nq^{n^2+n}}{(q^2;q^2)_{2n+1}}=\frac{J_4J_{4,14}J_{6,28}}{J_2^2J_{28}}, \quad \text{(S.\ 80)}\label{S.80}\\
    &\sum_{n=0}^\infty \frac{(-q^2;q^2)_nq^{n^2+n}}{(q^2;q^2)_{2n}}=\frac{J_4J_{2,14}J_{10,28}}{J_2^2J_{28}},  \quad \text{(S.\ 81)}\label{S.81}\\
    &\sum_{n=0}^\infty \frac{(-q^2;q^2)_nq^{n^2+3n}}{(q^2;q^2)_{2n+1}}=\frac{J_4J_{6,14}J_{2,28}}{J_2^2J_{28}}, \quad \text{(S.\ 82)}\label{S.82}\\
    &\sum_{n=0}^\infty \frac{(-q;q^2)_nq^{n^2}}{(q^2;q^2)_{2n}}=\frac{J_2J_{3,14}J_{8,28}}{J_1J_4J_{28}}, \quad \text{(S.\ 117)}\label{S.117}\\
    &\sum_{n=0}^\infty \frac{(-q;q^2)_nq^{n^2+2n}}{(q^2;q^2)_{2n}}=\frac{J_2J_{1,14}J_{12,28}}{J_1J_4J_{28}}, \quad  \text{(S.\ 118)}\label{S.118}\\
    &\sum_{n=0}^\infty \frac{(-q;q^2)_{n+1}q^{n^2+2n}}{(q^2;q^2)_{2n+1}}=\frac{J_2J_{5,14}J_{4,28}}{J_1J_4J_{28}}. \quad \text{(S.\ 119)}\label{S.119}
\end{align}
Most of them can be found in Slater's list \cite{Slater} and we use (S.\ $n$) to denote the $n$-th identity in \cite{Slater}.

We define the constant term extractor $\CT$ as
\begin{align}
{\CT}_{z_1,z_2,\dots,z_k}\Big(\sum_{n_1,n_2,\dots,n_k\in\mathbb{Z}}a_{n_1n_2\dots n_k}z_1^{n_1}z_2^{n_2}\cdots z_k^{n_k}\Big):=a_{00\cdots0}.
\end{align}
This operator extracts the constant term with respect to the variables $z_1,z_2,\dots,z_k$.

We now discuss some conditions to guarantee convergence.
\begin{lemma}\label{lem-convergence}
Let $m$ be a positive integer and $c>0$, $t>1$. The series $\sum_{n=0}^\infty \frac{z^n}{(q;q)_n^m}$ converges absolutely when $|z|<1$, and the series $\sum_{n=0}^\infty \frac{q^{cn^t}z^n}{(q;q)_n^m}$ converges absolutely for any $z\in \mathbb{C}$.
\end{lemma}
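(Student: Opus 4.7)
The plan is to reduce both statements to elementary comparison or root-test arguments after first giving a uniform positive lower bound for $|(q;q)_n|$. Since $|q|<1$ throughout the paper, the reverse triangle inequality gives $|1-q^k|\ge 1-|q|^k>0$ for every $k\ge 1$, so
\[
|(q;q)_n|=\prod_{k=1}^{n}|1-q^k|\ \ge\ \prod_{k=1}^{n}(1-|q|^k)\ \ge\ \prod_{k=1}^{\infty}(1-|q|^k)=:M>0,
\]
where $M>0$ because $\sum_{k\ge 1}|q|^k<\infty$ makes the infinite product converge to a strictly positive value. Consequently $1/|(q;q)_n|^m\le M^{-m}$ uniformly in $n$.

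For the first series I would then compare termwise:
\[
\left|\frac{z^n}{(q;q)_n^m}\right|\ \le\ \frac{|z|^n}{M^m},
\]
and the geometric series $\sum_{n\ge 0}|z|^n$ converges when $|z|<1$, so absolute convergence follows. This step is immediate once the uniform lower bound on $|(q;q)_n|$ is in hand.

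For the second series I would apply the root test to
\[
a_n\ :=\ \left|\frac{q^{cn^t}z^n}{(q;q)_n^m}\right|\ \le\ \frac{|q|^{cn^{t}}|z|^n}{M^m}.
\]
Taking $n$-th roots yields $a_n^{1/n}\le M^{-m/n}\,|q|^{cn^{t-1}}|z|$. Because $t>1$ and $0<|q|<1$, the factor $|q|^{cn^{t-1}}$ tends to $0$ as $n\to\infty$, while $M^{-m/n}\to 1$; thus $\limsup_{n\to\infty}a_n^{1/n}=0<1$ for every $z\in\mathbb{C}$, and the series converges absolutely.

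I do not anticipate a genuine obstacle here; the only point that requires a little care is the uniform lower bound on $|(q;q)_n|$, which in turn just needs the reverse triangle inequality together with the convergence of the infinite product $\prod_{k\ge 1}(1-|q|^k)$. Once that bound is established, the two convergence claims are standard geometric-series and root-test exercises.
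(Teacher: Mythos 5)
Your proof is correct. The key observation you isolate --- that $|(q;q)_n|\ge\prod_{k=1}^{\infty}(1-|q|^k)=M>0$ uniformly in $n$, valid because $\sum_k|q|^k<\infty$ forces the infinite product to converge to a positive limit --- is sound, and both convergence claims follow cleanly from it. The route differs from the paper's in both halves. For the first series the paper applies the ratio test directly, computing $\lim_n |z|/|1-q^n|^m=|z|$, and never needs a uniform lower bound on $(q;q)_n$; you instead compare termwise with the geometric series $\sum |z|^n/M^m$. For the second series the paper keeps the factor $(|q|;|q|)_n$ in the denominator, picks $\epsilon>0$ with $|q|^{\epsilon}|z|<1$ and $n_0$ with $cn^t\ge\epsilon n$ for $n\ge n_0$, and reduces to the first assertion; you apply the root test, noting $a_n^{1/n}\le M^{-m/n}|q|^{cn^{t-1}}|z|\to 0$ since $t>1$. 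Your version is arguably slightly cleaner and more self-contained (the uniform bound $M$ does all the work at once, and the root test gives radius of convergence $\infty$ immediately), while the paper's version has the minor advantage that the intermediate inequality $|(a;q)_n|\ge(|a|;|q|)_n$ it establishes is reused in the proof of Lemma \ref{lem-convergence-double}. Either argument is complete and elementary.
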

\begin{proof}
Note that
\begin{align}
\lim_{n\rightarrow \infty}\frac{|z^n(q;q)_{n}^{-m}|}{|z^{n-1}(q;q)_{n-1}^{-m}|}=\lim_{n\rightarrow \infty}\frac{|z|}{|(1-q^n)^m|}=|z|.
\end{align}
By the ratio test, we get the first assertion.

For $|a|<1$ we have
\begin{align}
    &|(a;q)_n|=|(1-a)(1-aq)\cdots (1-aq^{n-1})| \nonumber \\
    &\geq (1-|a|)(1-|a||q|)\cdots (1-|a||q|^{n-1})=(|a|;|q|)_n. \label{ineq-1}
\end{align}

Note that for any $z\in \mathbb{C}$, there exists some positive $\epsilon$ such that $|q|^\epsilon |z|<1$. There exists some positive integer $n_0$ such that $cn^t\geq \epsilon n$ whenever $n\geq n_0$. Therefore, using \eqref{ineq-1} we deduce that
\begin{align}
    \sum_{n\geq n_0} \left|\frac{q^{cn^t}z^n}{(q;q)_n^m}\right| \leq \sum_{n\geq n_0} \frac{|q|^{\epsilon n}|z|^n}{(|q|;|q|)_n^m}.
\end{align}
Since $|q|^\epsilon |z|<1$, we get the second assertion from the first assertion.
\end{proof}

\begin{lemma}\label{lem-convergence-double}
    Let $c>0$, $t>1$, $c_1,c_2\in \mathbb{R}$ and
\begin{align}
    s_{m,r}(q):=\frac{q^{c|m-r|^t+c_1m+c_2r}z^m}{(q;q)_m(q;q)_r}.
\end{align}
Then $\sum_{m,r\geq 0} s_{m,r}(q)$ converge absolutely when $|z|<|q|^{-c_1-c_2}$.
\end{lemma}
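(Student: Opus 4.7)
The plan is to reduce the convergence of $\sum_{m,r\geq 0} s_{m,r}(q)$ to two one-dimensional sums by factorizing along the anti-diagonals $m-r = \pm k$. First I would use the elementary estimate $|(q;q)_n| \geq (|q|;|q|)_n \geq (|q|;|q|)_\infty$, which follows from $|1-q^j|\geq 1-|q|^j$ together with the fact that $(|q|;|q|)_n$ is decreasing in $n$. This yields the uniform bound
\[ |s_{m,r}(q)| \leq (|q|;|q|)_\infty^{-2}\cdot |q|^{c|m-r|^t + c_1 m + c_2 r}|z|^m, \]
so that it suffices to prove the unweighted double sum
\[ \Sigma := \sum_{m,r\geq 0} |q|^{c|m-r|^t + c_1 m + c_2 r}|z|^m \]
is finite whenever $|z|<|q|^{-c_1-c_2}$.

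Next I would split $\Sigma = \Sigma_1+\Sigma_2$ according to whether $m\geq r$ or $m<r$, and change variables by setting $k = |m-r|$. Writing $m=r+k$ with $k\geq 0$ on the first piece and $r=m+k$ with $k\geq 1$ on the second, the two pieces factor as
\[ \Sigma_1 = \Big(\sum_{k\geq 0} |q|^{ck^t + c_1 k}|z|^k\Big)\Big(\sum_{r\geq 0} (|q|^{c_1+c_2}|z|)^r\Big), \]
\[ \Sigma_2 = \Big(\sum_{k\geq 1} |q|^{ck^t + c_2 k}\Big)\Big(\sum_{m\geq 0} (|q|^{c_1+c_2}|z|)^m\Big). \]
In both cases the inner geometric series converges precisely when $|q|^{c_1+c_2}|z|<1$, which is our hypothesis. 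The outer series over $k$ converge for \emph{any} $z$ and any real $c_1,c_2$, because $ck^t$ with $t>1$ dominates linear functions of $k$; this is essentially the tail-bound argument already used in the proof of Lemma \ref{lem-convergence}.

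The proof is therefore mostly bookkeeping: the key conceptual observation is that both the $1/(q;q)_n$ factors (uniformly bounded above) and the off-diagonal weight $q^{c|m-r|^t}$ (super-exponentially small away from the diagonal) are harmless, so that the effective convergence threshold is determined entirely by the diagonal $m=r$, on which the summand reduces to $(|q|^{c_1+c_2}|z|)^m$ up to a bounded factor. The only mild obstacle is to choose the bound on $|(q;q)_n|^{-1}$ that is loose enough to permit the anti-diagonal factorization yet tight enough not to destroy the sharp threshold $|z|<|q|^{-c_1-c_2}$; bounding by the constant $(|q|;|q|)_\infty^{-2}$, as above, is already sufficient.
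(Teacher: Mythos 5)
Your proof is correct and follows essentially the same route as the paper's: both split the double sum according to $m\geq r$ versus $m<r$, re-index by the difference $k=|m-r|$, and factor each piece into a super-exponentially convergent sum over $k$ (where $c>0$, $t>1$ make $|q|^{ck^t}$ dominate any exponential factor) times a geometric-type sum whose convergence is exactly the hypothesis $|z|<|q|^{-c_1-c_2}$. The only difference is cosmetic: you bound $1/|(q;q)_n|$ uniformly by the constant $(|q|;|q|)_\infty^{-1}$, whereas the paper keeps these factors, uses $|(q;q)_{m+r}|\geq(|q|;|q|)_m(|q|;|q|)_r$, and then invokes Lemma \ref{lem-convergence} for the resulting one-dimensional series; both variants are valid and yield the same sharp threshold.
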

\begin{proof}
From \eqref{ineq-1} we deduce that
\begin{align}
    |(q;q)_{m+r}|&=|(q;q)_r(q^{r+1};q)_m|\geq (|q|;|q|)_r(|q|^{r+1};|q|)_m \nonumber \\
    &\geq (|q|;|q|)_r (|q|;|q|)_m. \label{ineq-2}
\end{align}
We have
\begin{align*}
&\sum_{m,r\geq 0}\left| s_{m,r}(q)\right|=\sum_{m\geq r\geq 0}\left| s_{m,r}(q)\right|+\sum_{0\leq m< r}\left| s_{m,r}(q)\right| \nonumber \\
&=\sum_{m,r\geq 0} \left|\frac{q^{cm^t+c_1(m+r)+c_2r}z^{m+r}}{(q;q)_{m+r}(q;q)_r}\right|
+\sum_{m\geq 0,r>0} \left|\frac{q^{cr^t+c_1m+c_2(m+r)}z^{m}}{(q;q)_{m}(q;q)_{m+r}}\right| \nonumber \\
&\leq \sum_{m,r\geq 0} \frac{|q|^{cm^t+c_1m+(c_1+c_2)r}|z|^{m+r}}{(|q|;|q|)_r^2(|q|;|q|)_m} +\sum_{m\geq 0,r>0} \frac{|q|^{cr^t+(c_1+c_2)m+c_2r}|z|^{m}}{(|q|;|q|)_{m}^2(|q|;|q|)_{r}}  \nonumber \\
&\leq \sum_{m\geq 0} \frac{|q|^{cm^t+c_1m}|z|^m}{(|q|;|q|)_m} \times \sum_{r\geq 0} \frac{|q|^{(c_1+c_2)r}|z|^r}{(|q|;|q|)_r^2}+\sum_{m\geq 0} \frac{|q|^{(c_1+c_2)m}|z|^m}{(|q|;|q|)_m^2} \times \sum_{r>0} \frac{|q|^{cr^t+c_2r}}{(|q|;|q|)_r}. 
\end{align*}
From Lemma \ref{lem-convergence} we see that the series $\sum_{m,r\geq 0} s_{m,r}(q)$ converges absolutely when $|z|<|q|^{-c_1-c_2}$.
\end{proof}

\section{Proof of the rank reduction formulas}\label{sec-rank-reduction}
For convenience, we denote
\begin{align}
    &X_m:=\frac{1}{2}n^\mathrm{T} T_m n=n_1^2+\cdots+n_{m-1}^2+\frac{1}{2}n_{m}^2-n_{1}n_{2}-\cdots-n_{m-1}n_{m}.
\end{align}
Note that $X_1=\frac{1}{2}n_1^2$ and for $m\geq 1$ we have
\begin{align}
    X_m&=X_{m-1}+\frac{1}{2}(n_m-n_{m-1})^2,  \label{X-rec} \\
    X_m&=\frac{1}{2}n_1^2+\frac{1}{2}\sum_{i=1}^{m-1}(n_i-n_{i+1})^2. \label{X-square}
\end{align}

\begin{proof}[Proof of Theorem \ref{thm-rank-reduction}]
Summing over $n_{m}$ first using \eqref{euler-2}, we have
\begin{align}
    &\chi_{m}(x_1,\dots,x_{m};q) =\sum_{n_1,\dots,n_{m-1}\geq 0}\frac{q^{X_{m-1}}x_1^{n_1}\cdots x_{m-1}^{n_{m-1}}(-q^{\frac{1}{2}-n_{m-1}}x_{m};q)_{\infty}}{(q;q)_{n_1}\cdots(q;q)_{n_{m-1}}}\nonumber\\
    &=(-q^{\frac{1}{2}}x_{m};q)_{\infty}\sum_{n_1,\dots,n_{m-1}\geq 0}\frac{q^{X_{m-1}}x_1^{n_1}\cdots x_{m-1}^{n_{m-1}}x_{m}^{n_{m-1}}(-{q^{\frac{1}{2}}}/{x_{m}};q)_{n_{m-1}}}{(q;q)_{n_1}\cdots(q;q)_{n_{m-1}}}\nonumber\\
    &=(-q^{\frac{1}{2}}x_{m};q)_{\infty}\nonumber\\
    & \quad \times\CT_{z_1}\Bigg[ \sum_{\substack{n_1,\dots,n_{m-1}\geq 0\\s_{m-2}\in\mathbb{Z}}}\frac{q^{X_{m-2}+\frac{1}{2}s_{m-2}^2}x_1^{n_1}\cdots x_{m-1}^{n_{m-1}}x_{m}^{n_{m-1}}z_1^{n_{m-2}-n_{m-1}-s_{m-2}}(-{q^{\frac{1}{2}}}/{x_{m}};q)_{n_{m-1}}}{(q;q)_{n_1}\cdots(q;q)_{n_{m-1}}}\Bigg]\nonumber\\
    &=(-q^{\frac{1}{2}}z_{m};q)_{\infty}\CT_{z_1}\Bigg[\sum_{n_1,\dots,n_{m-2}\geq 0}\frac{q^{X_{m-2}}x_1^{n_1}\cdots x_{m-2}^{n_{m-2}}z_1^{n_{m-2}}}{(q;q)_{n_1}\dots(q;q)_{n_{m-2}}}\times \sum_{s_{m-2}\in\mathbb{Z}}q^{\frac{1}{2}s_{m-2}^2}z_1^{-s_{m-2}} \nonumber\\
    & \quad \times  \sum_{n_{m-1}\geq 0}\frac{(\frac{-q^{\frac{1}{2}}}{x_{m}};q)_{n_{m-1}} (\frac{x_{m-1}x_{m}}{z_1})^{n_{m-1}}}{(q;q)_{n_{m-1}}}\Bigg]\nonumber\\
    &=(-q^{\frac{1}{2}}x_{m};q)_{\infty}\CT_{z_1}\Bigg[\sum_{n_1,\dots,n_{m-2}\geq 0}\frac{q^{X_{m-2}}x_1^{n_1}\cdots x_{m-2}^{n_{m-2}}z_1^{n_{m-2}}}{(q;q)_{n_1}\dots(q;q)_{n_{m-2}}}\times \theta(z_1)\times \frac{(\frac{-q^{\frac{1}{2}}x_{m-1}}{z_1};q)_{\infty}}{(\frac{x_{m-1}x_{m}}{z_1};q)_{\infty}}\Bigg].
    \end{align}
Here for the second equality we used \eqref{aq-finite} and for the last equality we used \eqref{qbi}.

In the multi sums above we sum over $n_{m-2}$ first using \eqref{euler}, and arguing similarly as before we deduce that
\begin{align}
    & \chi_{m}(x_1,\dots,x_{m};q) \nonumber \\
    &=(-q^{\frac{1}{2}}x_{m};q)_{\infty}\CT_{z_1}\Bigg[\sum_{n_1,\dots,n_{m-3}\geq 0}\frac{q^{X_{m-3}+\frac{1}{2}n_{m-3}^2}x_1^{n_1}\cdots x_{m-3}^{n_{m-3}}(-q^{\frac{1}{2}-n_{m-3}}z_1x_{m-2};q)_\infty}{(q;q)_{n_1}\dots(q;q)_{n_{m-3}}} \nonumber\\
    &\quad \times \theta(z_1) \frac{(\frac{-q^{\frac{1}{2}}x_{m-1}}{z_1};q)_{\infty}}{(\frac{x_{m-1}x_{m}}{z_1};q)_{\infty}}\Bigg] \nonumber \\
    &=(-q^{\frac{1}{2}}x_{m};q)_{\infty}\CT_{z_1}\Bigg[\sum_{n_1,\dots,n_{m-3}\geq 0}\frac{q^{X_{m-3}}x_1^{n_1}\cdots x_{m-3}^{n_{m-3}}x_{m-2}^{n_{m-3}}z_1^{n_{m-3}}}{(q;q)_{n_1}\dots(q;q)_{n_{m-3}}}\times \theta(z_1)  \nonumber\\
    &\quad \times  \frac{(\frac{-q^{\frac{1}{2}}x_{m-1}}{z_1};q)_{\infty}(-q^{\frac{1}{2}}z_1x_{m-2};q)_{\infty}(\frac{-q^{\frac{1}{2}}}{z_1x_{m-2}};q)_{n_{m-3}}}{(\frac{x_{m-1}x_{m}}{z_1};q)_{\infty}}\Bigg] \nonumber \\
     &=\frac{(-q^{\frac{1}{2}}x_{m};q)_{\infty}}{(q;q)_{\infty}} \nonumber\\
    &\quad \times \CT_{z_1}\Bigg[\sum_{n_1,...,n_{m-3}\geq 0}\frac{q^{X_{m-3}}x_1^{n_1}\cdots x_{m-3}^{n_{m-3}}x_{m-2}^{n_{m-3}}z_1^{n_{m-3}}(\frac{-q^{\frac{1}{2}}}{z_1x_{m-2}};q)_{n_{m-3}}}{(q;q)_{n_1}\dots(q;q)_{n_{m-3}}}\times \frac{\theta(z_1)\theta(\frac{z_1}{x_{m-1}})}{(\frac{x_{m-1}x_{m}}{z_1};q)_{\infty}}\Bigg].
\end{align}
Here for the last equality we used the fact $x_{m-2}x_{m-1}=1$.

Repeating the above two steps and introducing a new variable $z_2$, we deduce that
\begin{align}
        & \chi_{m}(x_1,...,x_{m};q) \nonumber \\
        &=\frac{(-q^{\frac{1}{2}}x_{m};q)_{\infty}}{(q;q)_{\infty}^2}\CT_{z_1,z_2}\Bigg[\sum_{n_1,\dots,n_{m-5}\geq 0}\frac{q^{X_{m-5}}x_1^{n_1}\cdots x_{m-5}^{n_{m-5}}x_{m-4}^{n_{m-5}}z_2^{n_{m-5}}(\frac{-q^{\frac{1}{2}}}{z_2x_{m-4}};q)_{n_{m-5}}}{(q;q)_{n_1}\dots(q;q)_{n_{m-5}}} \nonumber\\
    & \quad \times \frac{\theta(z_1)\theta(z_2)\theta(\frac{z_1}{x_{m-1}})\theta(\frac{z_2}{x_{m-3}})}{(\frac{x_{m-1}x_{m}}{z_1},\frac{x_{m-3}x_{m-2}z_1}{z_2};q)_{\infty}}\Bigg].
\end{align}
Continuing this process and introducing new variables $z_3,z_4,\dots$, we express the generalized tadpole Nahm sums as constant terms of some infinite products. Specifically speaking, when $m=2r$ is even, we deduce that
\begin{align}
    &\chi_{2r}(x_1,\dots,x_{2r};q) \nonumber \\
    &=\frac{(-q^{\frac{1}{2}}x_{2r};q)_{\infty}}{(q;q)_{\infty}^{r-1}}\CT_{z_1,\dots,z_{r-1}}\Bigg[\sum_{n_1\geq0}\frac{q^{\frac{1}{2}n_1^2}(z_{r-1}x_1x_2)^{n_1}(\frac{-q^{\frac{1}{2}}}{z_{r-1}x_{2}};q)_{n_1}}{(q;q)_{n_1}} \nonumber\\
    &\quad \times  \frac{\theta(z_1)\theta(z_2)\cdots \theta(z_{r-1})\theta(z_1x_{2r-2})\theta(z_2x_{2r-4})\cdots \theta(z_{r-1}x_2)}{(\frac{x_{2r}}{z_1x_{2r-2}},\frac{z_1x_{2r-2}}{z_2x_{2r-4}},\dots,\frac{z_{r-2}x_4}{z_{r-1}x_2};q)_{\infty}}\Bigg]\nonumber\\
    &=\frac{(-q^{\frac{1}{2}}x_{2r};q)_{\infty}}{(q;q)_{\infty}^{r-1}}\CT_{z_1,\dots,z_{r}}\Bigg[\frac{\theta(z_1)\cdots \theta(z_r)\theta(z_1x_{2r-2})\cdots \theta(z_{r-1}x_2)(\frac{-q^{\frac{1}{2}}}{z_r x_0};q)_{\infty}}{(\frac{x_{2r}}{z_1x_{2r-2}},\dots,\frac{z_{r-2}x_4}{z_{r-1}x_2},\frac{z_{r-1}x_2}{z_r x_0};q)_{\infty}}\Bigg]\nonumber\\
    &=\frac{(-q^{\frac{1}{2}}x_{2r};q)_{\infty}}{(q;q)_{\infty}^{r-1}}\CT_{z_1,\dots,z_{r}}\Bigg[\frac{\theta(z_1)\cdots\theta(z_r)\theta(z_2x_{2})\cdots\theta(z_{r}x_{2r-2})(\frac{-q^{\frac{1}{2}}}{z_1x_0};q)_{\infty}}{(\frac{x_{2r}}{z_rx_{2r-2}},\dots,\frac{z_{3}x_4}{z_{2}x_2},\frac{z_{2}x_2}{z_1x_0};q)_{\infty}}\Bigg].\label{proof-even-constant}
    \end{align}
Here for the last equality we interchange the symbol $z_j$ with $z_{r+1-j}$ for $j=1,2,\dots,r$, respectively.

When $m=2r+1$ is odd we deduce that
\begin{align}
        & \chi_{2r+1}(x_1,\dots,x_{2r+1};q) \nonumber \\
        &=\frac{(-q^{\frac{1}{2}}x_{2r+1};q)_{\infty}}{(q;q)_{\infty}^{r-1}}\CT_{z_1,\dots,z_{r-1}}\left[\sum_{n_1,n_2\geq 0}\frac{q^{X_2}x_1^{n_1}x_2^{n_2}x_{3}^{n_2}z_{r-1}^{n_2}(\frac{-q^{\frac{1}{2}}}{z_{r-1}x_3};q)_{n_2}}{(q;q)_{n_1}(q;q)_{n_2}} \right.\nonumber\\
    &\quad \times \left.\frac{\theta(z_1)\theta(z_2)\cdots\theta(z_{r-1})\theta(z_1x_{2r-1})\theta(z_2x_{2r-3})\cdots\theta(z_{r-1}x_3)}{(\frac{x_{2r+1}}{z_1x_{2r-1}},\frac{z_1x_{2r-1}}{z_2x_{2r-3}},...,\frac{z_{r-2}x_{5}}{z_{r-1}x_{3}};q)_{\infty}}\right]\nonumber\\
    &=\frac{(-q^{\frac{1}{2}}x_{2r+1};q)_{\infty}}{(q;q)_{\infty}^{r-1}}\CT_{z_1,\dots,z_r}\left[\sum_{n_1\geq 0}\frac{q^{\frac{1}{2}n_1^2}x_1^{n_1}z_r^{n_1}}{(q;q)_{n_1}}\times \sum_{n_2\geq0}\frac{(\frac{-q^{\frac{1}{2}}}{z_{r-1}x_3};q)_{n_2}x_2^{n_2}x_3^{n_2}z_{r-1}^{n_2}z_r^{-n_2}}{(q;q)_{n_2}} \right.\nonumber\\
    &\quad \times \left.\sum_{s_1\in\mathbb{Z}}q^{\frac{s_1^2}{2}}z_r^{-s_1}\cdot\frac{\theta(z_1)\theta(z_2)\cdots\theta(z_{r-1})\theta(z_1x_{2r-1})\theta(z_2x_{2r-3})\cdots\theta(z_{r-1}x_3)}{(\frac{x_{2r+1}}{z_1x_{2r-1}},\frac{z_1x_{2r-1}}{z_2x_{2r-3}},\dots,\frac{z_{r-2}x_{5}}{z_{r-1}x_{3}};q)_{\infty}}\right]\nonumber\\
   &=\frac{(-q^{\frac{1}{2}}x_{2r+1};q)_{\infty}}{(q;q)_{\infty}^{r}}\CT_{z_1,\dots,z_r}\left[\frac{\theta(z_1)\theta(z_2)\cdots\theta(z_{r})\theta(z_1x_{2r-1})\theta(z_2x_{2r-3})\cdots\theta(z_{r}x_1)}{(\frac{x_{2r+1}}{z_1x_{2r-1}},\frac{z_1x_{2r-1}}{z_2x_{2r-3}},\dots,\frac{z_{r-1}x_{3}}{z_{r}x_{1}};q)_{\infty}}\right] \nonumber \\
    &=\frac{(-q^{\frac{1}{2}}x_{2r+1};q)_{\infty}}{(q;q)_{\infty}^{r}}\CT_{z_1,\dots,z_r}\left[\frac{\theta(z_1)\theta(z_2)\cdots\theta(z_{r})\theta(z_{1}x_1)\theta(z_2x_{3}) \cdots \theta(z_{r}x_{2r-1})}{(\frac{x_{2r+1}}{z_rx_{2r-1}},\frac{z_rx_{2r-1}}{z_{r-1}x_{2r-3}},\dots,\frac{z_{2}x_{3}}{z_{1}x_{1}};q)_{\infty}}\right]. \label{proof-odd-consant}
\end{align}
Here for the last equality we interchange the symbol $z_j$ with $z_{r+1-j}$ for $j=1,2,\dots,r$, respectively.

Next, we are going to remove the constant term operators.
From \eqref{proof-even-constant} we have
\begin{align}
    &  \chi_{2r}(x_1,\dots,x_{2r};q) \nonumber \\
    &=\frac{(-q^{\frac{1}{2}}x_{2r};q)_{\infty}}{(q;q)_{\infty}^{r-1}}\CT_{z_1,\dots,z_{r}}\Bigg[\sum_{i\in \mathbb{Z}} q^{\frac{1}{2}i^2}z_1^{i} \sum_{n_1\geq 0} \frac{q^{\frac{1}{2}n_1^2}z_1^{-n_1}x_0^{-n_1}}{(q;q)_{n_1}} \sum_{n_2\geq 0} \frac{(\frac{z_2x_2}{z_1x_0})^{n_2}}{(q;q)_{n_2}} \nonumber \\
    &\quad \times \frac{\theta(z_2)\cdots \theta(z_r)\theta(z_2x_{2})\cdots \theta(z_{r}x_{2r-2})}{(\frac{x_{2r}}{z_rx_{2r-2}},\dots,\frac{z_{3}x_4}{z_{2}x_2};q)_\infty}\Bigg]\nonumber\\
    &=\frac{(-q^{\frac{1}{2}}x_{2r};q)_{\infty}}{(q;q)_{\infty}^{r-1}}\CT_{z_2,\dots,z_{r}}\Bigg[\sum_{n_1,n_2\geq0}\frac{q^{\frac{1}{2}n_1^2+\frac{1}{2}(n_1+n_2)^2}x_0^{-n_1-n_2}x_2^{n_2}}{(q;q)_{n_1}(q;q)_{n_2}}z_2^{n_2}  \nonumber  \\
    &\quad \quad \times   \frac{\theta(z_2)\cdots \theta(z_r)\theta(z_2x_2)\cdots \theta(z_r x_{2r-2})}{(\frac{z_3x_{4}}{z_2x_{2}},\dots,\frac{x_{2r}}{z_{r}x_{2r-2}};q)_{\infty}}\Bigg]\nonumber\\
     &=\frac{(-q^{\frac{1}{2}}x_{2r};q)_{\infty}}{(q;q)_{\infty}^{r-1}}\CT_{z_2,\dots,z_{r}}\Bigg[\sum_{n_1,n_2\geq0}\frac{q^{\frac{1}{2}n_1^2+\frac{1}{2}(n_1+n_2)^2}x_0^{-n_1-n_2}x_2^{n_2}}{(q;q)_{n_1}(q;q)_{n_2}}z_2^{n_2}   \times \sum_{i\in \mathbb{Z}} q^{\frac{1}{2}i^2}z_2^{-i}x_2^{-i} \nonumber  \\
    &\quad \quad \times \sum_{j\in \mathbb{Z}}   q^{\frac{1}{2}j^2}z_2^{-j} \sum_{n_3\geq 0} \frac{(\frac{z_3x_4}{z_2x_2})^{n_3}}{(q;q)_{n_3}} \times \frac{\theta(z_3)\cdots \theta(z_r)\theta(z_3x_4)\cdots \theta(z_r x_{2r-2})}{(\frac{z_4x_{6}}{z_3x_{4}},\dots,\frac{x_{2r}}{z_{r}x_{2r-2}};q)_{\infty}}\Bigg]\nonumber\\
     &=\frac{(-q^{\frac{1}{2}}x_{2r};q)_{\infty}}{(q;q)_{\infty}^{r-1}}\CT_{z_2,\dots,z_{r}}\Bigg[\sum_{n_1,n_2,n_3\geq0}\frac{q^{\frac{1}{2}n_1^2+\frac{1}{2}(n_1+n_2)^2}x_0^{-n_1-n_2}x_2^{n_2}(\frac{z_3x_4}{x_2})^{n_3}}{(q;q)_{n_1}(q;q)_{n_2}(q;q)_{n_3}}    \nonumber  \\
    &\quad \quad \times \sum_{\substack{i,j\in \mathbb{Z} \\ i+j=n_2-n_3}}   q^{\frac{1}{2}(i^2+j^2)} x_2^{-i} \times \frac{\theta(z_3)\cdots \theta(z_r)\theta(z_3x_4)\cdots \theta(z_r x_{2r-2})}{(\frac{z_4x_{6}}{z_3x_{4}},\dots,\frac{x_{2r}}{z_{r}x_{2r-2}};q)_{\infty}}\Bigg] \nonumber \\
     &=\frac{(-q^{\frac{1}{2}}x_{2r};q)_{\infty}}{(q;q)_{\infty}^{r-1}}\CT_{z_3,\dots,z_{r}}\Bigg[\sum_{n_1,n_2,n_3\geq0}\frac{q^{\frac{1}{2}n_1^2+\frac{1}{2}(n_1+n_2)^2+\frac{1}{4}(n_2-n_3)^2}x_0^{-n_1-n_2}x_2^{n_2-n_3}x_4^{n_3}}{(q;q)_{n_1}(q;q)_{n_2}(q;q)_{n_3}} \nonumber\\
    & \quad \times \Big(\sum_{\ell_2\in\mathbb{Z}}q^{(\ell_2-\frac{n_2-n_3}{2})^2}x_2^{-\ell_2}\Big)z_3^{n_3} \frac{\theta(z_3)\cdots \theta(z_r)\theta(z_3x_4)\cdots \theta(z_rx_{2r-2})}{(\frac{z_4x_6}{z_3x_4},\dots,\frac{x_{2r}}{z_rx_{2r-2}};q)_{\infty}}\Bigg].
    \label{proof-even-mid}
\end{align}
Here for the last equality we used the fact that when $i+j=n_{2}-n_3$ we have
\begin{align}\label{ij-id}
    \frac{1}{2}(i^2+j^2)=\left(\frac{i+j}{2}\right)^2+\left(\frac{i-j}{2}\right)^2=\frac{1}{4}(n_{2}-n_{3})^2+\left(i-\frac{n_{2}-n_3}{2}\right)^2.
\end{align}
Arguing similarly as above to eliminate the variables $z_3,z_4,\dots,z_r$, we eventually arrive at  \eqref{eq-reduce-even}.

For the odd rank case, starting from \eqref{proof-odd-consant} and arguing similarly as the even rank case to eliminate the variables $z_1,z_2,\dots,z_r$, we eventually arrive at \eqref{eq-reduce-odd}.
\end{proof}

Before we close this section, we point out that we may get some other representations for the generalized tadpole Nahm sums if we use the constant term method in a different way. Here we take the principal character $\chi_r(1,1,\dots,1;q)$ as an example to illustrate this phenomenon.
\begin{theorem}
\label{s2}
For $r>1$, we have
    \begin{align}\label{eq-thm-new-repn}
        \chi_r(1,1,\dots,1;q)=(q;q)_{\infty}^r\sum_{n_1,\dots,n_r\geq0}\frac{q^{X_r}(-q^{\frac{1}{2}};q)_{n_r}}{(q;q)_{n_1}^2\cdots (q;q)_{n_r}^2}.
    \end{align}
\end{theorem}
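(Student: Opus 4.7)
The plan is to derive \eqref{eq-thm-new-repn} by combining Euler's identities with the constant-term manipulations developed for Theorem \ref{thm-rank-reduction}. First, I would sum over $n_r$ in $\chi_r(1,\ldots,1;q)$ using \eqref{euler-2}, producing the factor $(-q^{\frac{1}{2}-n_{r-1}};q)_\infty$. Applying \eqref{aq-finite} with $a=-q^{\frac{1}{2}}$ and $n=n_{r-1}$ rewrites this as $q^{-n_{r-1}^2/2}(-q^{\frac{1}{2}};q)_\infty(-q^{\frac{1}{2}};q)_{n_{r-1}}$, and the cancellation $n_{r-1}^2-n_{r-1}^2/2=n_{r-1}^2/2$ yields the intermediate identity
\begin{align*}
\chi_r(1,\ldots,1;q)=(-q^{\frac{1}{2}};q)_\infty\sum_{n_1,\ldots,n_{r-1}\geq 0}\frac{q^{X_{r-1}}(-q^{\frac{1}{2}};q)_{n_{r-1}}}{(q;q)_{n_1}\cdots(q;q)_{n_{r-1}}}.
\end{align*}

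The base case $r=1$ of \eqref{eq-thm-new-repn} amounts to
\begin{align*}
(-q^{\frac{1}{2}};q)_\infty=(q;q)_\infty\sum_{n\geq 0}\frac{q^{n^2/2}(-q^{\frac{1}{2}};q)_n}{(q;q)_n^2},
\end{align*}
which I would prove from Heine's transformation \eqref{Heine}: in the limit $a\to\infty$ with $t$ replaced by $-q^{\frac{1}{2}}/a$, $b=-q^{\frac{1}{2}}$, and $c=q$, the left side becomes $\sum q^{n^2/2}(-q^{\frac{1}{2}};q)_n/(q;q)_n^2$, and the transformed inner sum collapses via Euler's identity \eqref{euler} to $1/(-q^{\frac{1}{2}};q)_\infty$, giving the stated closed form. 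Using this identity to expand the prefactor $(-q^{\frac{1}{2}};q)_\infty$ in the intermediate formula above introduces the required $n_r$-summation with the factors $q^{n_r^2/2}(-q^{\frac{1}{2}};q)_{n_r}/(q;q)_{n_r}^2$; however, the cross term $-n_{r-1}n_r$ in $X_r$ and the squaring of the remaining denominators $(q;q)_{n_i}$ for $i<r$ must still be recovered.

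For general $r\geq 2$, the plan is to apply the constant-term method of Theorem \ref{thm-rank-reduction} in an alternative way. Decomposing $X_r=\tfrac{1}{2}n_1^2+\tfrac{1}{2}\sum_{i=1}^{r-1}(n_i-n_{i+1})^2$ via \eqref{X-square}, introducing theta functions through constant-term extractors, and summing each $n_i$ using Euler's identities \eqref{euler-2} and \eqref{euler} expresses $\chi_r(1,\ldots,1;q)$ as a constant term of a product of $r$ theta functions divided by suitable Pochhammer factors. The key transformation is to apply the $q$-binomial theorem \eqref{qbi} to a single denominator $(z_{r-1};q)_\infty^{-1}$, producing the sum $\sum_{n_r\geq 0}(-q^{\frac{1}{2}};q)_{n_r}z_{r-1}^{n_r}/(q;q)_{n_r}$ and an unwanted $(-q^{\frac{1}{2}}z_{r-1};q)_\infty^{-1}$ that combines with $\theta(z_{r-1})$ via \eqref{JTP} to yield one factor of $(q;q)_\infty$. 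The remaining $r-1$ factors of $(q;q)_\infty$ together with the squared denominators $(q;q)_{n_i}^2$ come from iterated use of the Durfee rectangle identity $\sum_{a\geq 0}q^{a(a+b)}/[(q;q)_a(q;q)_{a+b}]=1/(q;q)_\infty$, which is itself a consequence of constant-term extraction from the product expansion $\theta(z)/(q;q)_\infty=(-q^{\frac{1}{2}}z,-q^{\frac{1}{2}}/z;q)_\infty$ in \eqref{JTP}.

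The hard part will be the careful bookkeeping to verify that only $(-q^{\frac{1}{2}};q)_{n_r}$ (and no analogous factor for $n_i$ with $i<r$) survives in the numerator, that the full quadratic form $X_r$ is recovered with all cross terms $-n_in_{i+1}$, and that the Pochhammer denominators assemble into the exact pattern $\prod_i(q;q)_{n_i}^2$ required by \eqref{eq-thm-new-repn}. The asymmetric choice of applying \eqref{qbi} only to the denominator factor associated with $n_r$, rather than to the $n_1$-associated factor, is precisely what picks out \eqref{eq-thm-new-repn} from among the family of alternative representations alluded to in the paper.
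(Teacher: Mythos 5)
Your proposal does not close. The two things you actually carry out---the intermediate identity $\chi_r(1,\dots,1;q)=(-q^{\frac{1}{2}};q)_\infty\sum q^{X_{r-1}}(-q^{\frac{1}{2}};q)_{n_{r-1}}/\prod(q;q)_{n_i}$ and the Heine evaluation of the $r=1$ seed---are both correct, but as you yourself observe they cannot simply be multiplied together, since that loses the cross terms of $X_r$ and the squared denominators; they are dead ends for this statement. Everything therefore rests on your third paragraph, and that is where the gap sits. Your starting point there is the same as the paper's: one writes
\begin{align*}
\chi_r(1,\dots,1;q)=\CT_{z_1,\dots,z_r}\left[\frac{\theta(z_1)\cdots\theta(z_r)}{(z_0/z_1,z_1/z_2,\dots,z_{r-1}/z_r;q)_\infty}\right],\qquad z_0=1,
\end{align*}
and extracts $(q;q)_\infty^r$ from the Jacobi triple product. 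But the mechanism you propose for the rest---iterated use of the Durfee rectangle identity $\sum_a q^{a(a+b)}/[(q;q)_a(q;q)_{a+b}]=1/(q;q)_\infty$---cannot produce the right-hand side of \eqref{eq-thm-new-repn}: that identity collapses a sum into $1/(q;q)_\infty$, which is the wrong direction (the $(q;q)_\infty^r$ in the target sits in the numerator), and even read in reverse it yields denominators $(q;q)_a(q;q)_{a+b}$ with two \emph{distinct} indices rather than the squares $(q;q)_{n_j}^2$, and it generates none of the cross terms $-n_{j-1}n_j$.

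What actually does the work in the paper is one elimination step, iterated $r$ times. To remove $z_r$ one expands $(-q^{\frac{1}{2}}z_r;q)_\infty$ by \eqref{euler-2} and $(-q^{\frac{1}{2}}/z_r;q)_\infty/(z_{r-1}/z_r;q)_\infty$ by \eqref{qbi} with $a=-q^{\frac{1}{2}}/z_{r-1}$; the constant term in $z_r$ forces the two summation indices to coincide, and \emph{that} coincidence is the source of the squared denominator $(q;q)_{n_1}^2$. The step leaves behind $(-q^{\frac{1}{2}}/z_{r-1};q)_{n_1}z_{r-1}^{n_1}$, which by \eqref{aq-finite} equals $q^{\frac{1}{2}n_1^2}(-q^{\frac{1}{2}-n_1}z_{r-1};q)_\infty/(-q^{\frac{1}{2}}z_{r-1};q)_\infty$; the denominator cancels against the matching piece of $\theta(z_{r-1})$, and the Euler expansion of $(-q^{\frac{1}{2}-n_1}z_{r-1};q)_\infty$ is what supplies the cross term $-n_1n_2$ for the next round. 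Iterating hands the finite Pochhammer down the chain until it lands on $z_0=1$ as $(-q^{\frac{1}{2}};q)_{n_r}$, which is exactly why only the last index acquires that factor. None of this is in your sketch, and here the ``careful bookkeeping'' you defer is not a routine verification---it is the proof.
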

\begin{proof}
We define
\begin{align}
   & P_r=P_r(z_1,z_2,\dots,z_r;q) \nonumber \\
   &:=(-q^{\frac{1}{2}}z_1,\dots,-q^{\frac{1}{2}}z_r;q)_{\infty}\frac{(-q^{\frac{1}{2}}/{z_1},-q^{\frac{1}{2}}/{z_2},\dots,-q^{1/2}/z_r;q)_{\infty}}{(1/{z_1},z_1/z_2,\dots,z_{r-1}/z_r;q)_{\infty}}
\end{align}
where $1<|z_1|<|z_2|<\cdots<|z_{r-1}|<|z_r|$.
In view of \eqref{X-square} we have
    \begin{align}
      &  \chi_r(1,\dots,1;q)=\sum_{n_1,\dots,n_{r}\geq 0}\frac{q^{X_r}}{(q;q)_{n_1}\cdots (q;q)_{n_{r}}}\nonumber\\
        &=\CT_{z_1,\dots,z_{r}}\Big[\sum_{n_1\geq0}\frac{z_1^{-n_1}z_2^{n_1}}{(q;q)_{n_1}}\sum_{n_2\geq0}\frac{z_2^{-n_2}z_3^{n_2}}{(q;q)_{n_2}}\cdots \sum_{n_r\geq0}\frac{z_r^{-n_r}}{(q;q)_{n_r}} \nonumber \\
        &\qquad \times \sum_{\ell_1\in\mathbb{Z}}q^{\frac{\ell_1^2}{2}}z_1^{\ell_1}\sum_{\ell_2\in\mathbb{Z}}q^{\frac{\ell_2^2}{2}}z_2^{\ell_2}\cdots \sum_{\ell_r\in\mathbb{Z}}q^{\frac{\ell_r^2}{2}}z_r^{\ell_r}\Big]\nonumber\\
        &=\CT_{z_1,\dots,z_{r}}\left[\frac{\theta(z_1)\theta(z_2)\cdots \theta(z_r)}{({z_2}/{z_1},{z_3}/{z_2},\dots,{z_{r}}/{z_{r-1}},{1}/{z_r};q)_{\infty}}\right] \nonumber \\
        &=\CT_{z_1,\dots,z_{r}}\left[\frac{\theta(z_1)\theta(z_2)\cdots \theta(z_r)}{({z_0}/{z_1},{z_1}/{z_2},\dots,{z_{r-1}}/{z_r};q)_{\infty}}\right].
        \end{align}
Here for the last equality we interchange $z_j$ with $z_{r+1-j}$ for $j=1,2,\dots,r$ and we set $z_0=1$.

Recall the product expression of $\theta(z)$ in \eqref{JTP}. We have
\begin{align}
&  \chi_r(1,\dots,1;q) \nonumber \\
        &=(q;q)_{\infty}^r\CT_{z_1,\dots,z_{r}}\left[(-q^{\frac{1}{2}}z_1;q)_{\infty}\cdots (-q^{\frac{1}{2}}z_r;q)_{\infty}\frac{({-q^{\frac{1}{2}}}/{z_1};q)_{\infty}}{({z_0}/{z_1};q)_{\infty}}\cdots \frac{({-q^{\frac{1}{2}}}/{z_r};q)_{\infty}}{({z_{r-1}}/{z_r};q)_{\infty}}\right]\nonumber\\
        &=(q;q)_{\infty}^r\CT_{z_1,\dots,z_{r}}\left[P_{r-1}\sum_{i\geq0}\frac{q^{\frac{1}{2}i^2}z_r^i}{(q;q)_i}\sum_{j\geq0}\frac{({-q^{\frac{1}{2}}}/{z_{r-1}};q)_j}{(q;q)_j}\left(\frac{z_{r-1}}{z_r}\right)^j\right]\nonumber\\
        &=(q;q)_{\infty}^r\CT_{z_1,\dots,z_{r-1}}\left[P_{r-1}\sum_{n_1\geq0}\frac{q^{\frac{1}{2}n_{1}^2}(-\frac{q^{\frac{1}{2}}}{z_{r-1}};q)_{n_1}}{(q;q)_{n_1}^2}z_{r-1}^{n_1}\right]. \label{new-mid}
\end{align}
This eliminates the variable $z_r$.

Using \eqref{aq-finite} with $a=-q^{\frac{1}{2}}z_{r-1}$ and $n=n_1$ and using \eqref{euler-2} and \eqref{new-mid}, we deduce that
\begin{align}
&  \chi_r(1,\dots,1;q) \nonumber \\
 &=(q;q)_{\infty}^r\CT_{z_1,\dots,z_{r-1}}\left[P_{r-2}\sum_{j\geq0}\frac{(\frac{-q^{\frac{1}{2}}}{z_{r-2}};q)_j}{(q;q)_j}\left(\frac{z_{r-2}}{z_{r-1}}\right)^j\sum_{n_1\geq0}\frac{q^{n_1^2}(-q^{\frac{1}{2}-n_1}z_{r-1};q)_{\infty}}{(q;q)_{n_1}^2}\right] \nonumber \\
 &=(q;q)_{\infty}^r\CT_{z_1,\dots,z_{r-1}}\left[P_{r-2}\sum_{j\geq0}\frac{(\frac{-q^{\frac{1}{2}}}{z_{r-2}};q)_j}{(q;q)_j}\left(\frac{z_{r-2}}{z_{r-1}}\right)^j\sum_{n_1\geq0}\frac{q^{n_1^2}}{(q;q)_{n_1}^2} \sum_{n_2\geq 0} \frac{q^{\frac{1}{2}n_2^2-n_1n_2}z_{r-1}^{n_2}}{(q;q)_{n_2}}\right] \nonumber \\
        &=(q;q)_{\infty}^r\CT_{z_1,\dots,z_{r-2}}\left[P_{r-2}\sum_{n_1,n_2\geq0}\frac{q^{X_2}(\frac{-q^{\frac{1}{2}}}{z_{r-2}};q)_{n_2}z_{r-2}^{n_2}}{(q;q)_{n_1}^2(q;q)_{n_2}^2}\right].
\end{align}
This eliminates the variable $z_{r-1}$.
Repeating the above process to eliminate the variables $z_{r-2},z_{r-3},\dots, z_1$, respectively, we eventually arrive at
\begin{align}
&  \chi_r(1,\dots,1;q)=(q;q)_{\infty}^r \sum_{n_1,\dots,n_{r}\geq0}\frac{q^{X_{r}}(\frac{-q^{\frac{1}{2}}}{z_0};q)_{n_{r}}z_0^{n_{r}}}{(q;q)_{n_1}^2...(q;q)_{n_{r}}^2}.
\end{align}
Since $z_0=1$, we obtain the desired identity.
\end{proof}
It is not clear to us whether \eqref{eq-thm-new-repn} is helpful or not to prove Conjecture \ref{conj-CMP}.

\section{The rank four tadpole Nahm sums}\label{sec-rank4}
\subsection{Proof of Theorem \ref{thm-dual-Zagier}}\label{sec-dual}
We first evaluate some alternating triple sums which will play a key role in the proof of Theorem \ref{thm-dual-Zagier}.
\begin{lemma}\label{lem-G}
For $k=1,2,3,4$ we define
\begin{align}
    &G_k(q):=\sum_{m,n,r\geq0}\frac{(-1)^mq^{(n-m)^2+(n-r)^2+(m,n,r)\beta_k}}{(q^4;q^4)_m(q^4;q^4)_n(q^4;q^4)_r}, \label{Gk-defn}\\
    &\widetilde{G}_k(q):=\sum_{m,n,r\geq0}\frac{(-1)^nq^{(n-m)^2+(n-r)^2+(m,n,r)\beta_k}}{(q^4;q^4)_m(q^4;q^4)_n(q^4;q^4)_r} \label{wGk-defn}
\end{align}
where $\beta_1=(2,0,0)^\mathrm{T}$, $\beta_2=(2,-2,2)^\mathrm{T}$, $\beta_3=(2,0,2)^\mathrm{T}$ and $\beta_4=(0,0,2)^\mathrm{T}$. We have
\begin{align}
    G_1(q)&=\frac{J_4^2}{J_2J_8}+2q\frac{J_2^2J_8^3J_{12}^2}{J_4^6J_6}, \label{eq-G1} \\
    \widetilde{G}_1(q)&=\frac{J_2^3J_{8}^2J_{12}^2}{J_4^6J_{24}}, \label{eq-wG1} \\
    G_2(q)&= 2\frac{J_2J_6^2J_8^3}{J_4^5J_{12}}, \label{eq-G2}\\
    \widetilde{G}_2(q)&=-2q\frac{J_2^3J_8J_{24}^2}{J_4^5J_{12}},  \label{eq-wG2} \\
    G_3(q)&=\frac{J_4^3J_{12}^2}{J_2J_6J_8^3},   \label{eq-G3}\\
    \widetilde{G}_3(q)&=\frac{J_2J_6J_{24}}{J_8^2J_{12}},    \label{eq-wG3} \\
    G_4(q)&=\frac{J_4^2}{J_2J_8}-2q\frac{J_2^2J_8^3J_{12}^2}{J_4^6J_6},  \label{eq-G4} \\
    \widetilde{G}_4(q)&=\frac{J_2^3J_{8}^2J_{12}^2}{J_4^6J_{24}}.  \label{eq-wG4}
\end{align}
\end{lemma}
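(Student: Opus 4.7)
The plan is to evaluate each of the eight sums in Lemma~\ref{lem-G} by decoupling the triple series along the middle index $n$, evaluating the resulting inner sums in closed form, and then collapsing the outer $n$-sum via a Rogers--Ramanujan type identity from Section~\ref{sec-pre}. The first observation is a symmetry: since the exponent $(n-m)^2+(n-r)^2$ is symmetric under $m\leftrightarrow r$ and $(-1)^n$ is invariant under this swap, while $\beta_1=(2,0,0)^{\mathrm T}$ and $\beta_4=(0,0,2)^{\mathrm T}$ are interchanged, we get the free identity $\widetilde G_1(q)=\widetilde G_4(q)$. For $G_k$ the sign $(-1)^m$ breaks the symmetry, but the fact that the right-hand sides \eqref{eq-G1} and \eqref{eq-G4} share the piece $J_4^2/(J_2J_8)$ and differ only in the sign of the correction $2q\,J_2^2J_8^3J_{12}^2/(J_4^6J_6)$ suggests that $G_1$ and $G_4$ should be computed through their symmetric and antisymmetric combinations in $m\leftrightarrow r$. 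After substituting $j=n-m$ and $\ell=n-r$ (legitimate because $1/(q^4;q^4)_m$ vanishes for $m<0$), the exponent $(n-m)^2+(n-r)^2$ becomes $j^2+\ell^2$ and the triple sum factorizes as
\[
G_k(q)=\sum_{n\geq0}\frac{q^{\alpha_k n}(-1)^n}{(q^4;q^4)_n}\,F_k(n;q)\,H_k(n;q),
\]
with $F_k$ a truncated alternating sum in $j\le n$ and $H_k$ a truncated sum in $\ell\le n$.

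The next step is to evaluate $F_k(n;q)$ and $H_k(n;q)$ as ratios of Pochhammer symbols times a $q^{cn^2}$ prefactor. I would reverse the summation indices, use the factorization $(q^4;q^4)_m=(q^2;q^2)_m(-q^2;q^2)_m$ to separate alternating and non-alternating pieces, then apply Euler's identities \eqref{euler-2}--\eqref{euler} and Heine's transformation \eqref{Heine} at base $q^4$. The finite-sum formulas \eqref{finite-sum-1}--\eqref{finite-sum-11} are tailored to collapse precisely such sums and should produce closed forms. With $F_k H_k$ in closed form, the outer $n$-sum becomes a single Rogers--Ramanujan series that should match one of the Slater identities \eqref{S.25}--\eqref{S.119} or Ramanujan's \eqref{Ramanujan[10 Entry4.2.11]}. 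A final application of Jacobi's triple product \eqref{JTP} and the definitions \eqref{eq-psi}--\eqref{theta1-defn} then rearranges the resulting infinite product into the $J_m,J_{a,m}$ notation of \eqref{eq-G1}--\eqref{eq-wG4}.

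The principal difficulty lies in the inner evaluation. Because the denominators involve $(q^4;q^4)_m$ rather than $(q;q)_m$, the classical Euler/Heine identities apply only after careful base changes, and the choice among \eqref{finite-sum-1}--\eqref{finite-sum-11} depends subtly on the parities of the shifted indices. Moreover, the cases $G_2,G_3$, whose linear parts are symmetric in $m,r$, will likely require a different orchestration than the asymmetric cases $G_1,G_4$. Identifying which Slater identity closes each final $n$-series, and matching prefactors and signs, is the combinatorial crux that may require several attempts per case.
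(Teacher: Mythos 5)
Your symmetry observations are sound: $\widetilde G_1=\widetilde G_4$ follows from swapping $m\leftrightarrow r$, and the relation between $G_1$ and $G_4$ through the parity of $m+r$ is essentially how the paper disposes of the case $k=4$. But the central step of your plan does not work. After you fix $n$ and decouple, the two inner sums are
\begin{align*}
F_k(n;q)=\sum_{m\geq 0}\frac{(-1)^{n-m}q^{(n-m)^2+a_km}}{(q^4;q^4)_m},\qquad H_k(n;q)=\sum_{r\geq 0}\frac{q^{(n-r)^2+c_kr}}{(q^4;q^4)_r},
\end{align*}
and these are \emph{infinite} series, not finite ones: the constraint $j\le n$ is only one-sided. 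Their quadratic exponent is $m^2$ against the base $q^4$, i.e.\ half the Euler rate $q^{4\binom{m}{2}}=q^{2m^2-2m}$, so \eqref{euler-2} does not apply; each is a Rogers--Ramanujan--type series with a free shift parameter $n$ (essentially $\sum_m q^{m^2}z^m/(q^4;q^4)_m$ at $z=\pm q^{a_k-2n}$), and such series admit product or Pochhammer-quotient evaluations only at isolated special values of $z$, not for general $n$. The finite-sum formulas \eqref{finite-sum-1}--\eqref{finite-sum-11} cannot rescue this either: they evaluate sums constrained by $m+r=\text{const}$, a constraint that never arises in your decomposition. So the claimed closed forms for $F_k$ and $H_k$ do not exist, and the outer $n$-sum never reduces to a single Slater series.

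The missing idea is to sum over the \emph{middle} index $n$ first: its quadratic coefficient in $(n-m)^2+(n-r)^2$ is $2n^2$, which does match Euler's identity \eqref{euler-2} in base $q^4$, producing $(-q^{2+b_k-2m-2r};q^4)_\infty$ for $G_k$ and, because the sign $(-1)^n$ flips the argument, $(q^{2+b_k-2m-2r};q^4)_\infty$ for $\widetilde G_k$. Since $m^2+r^2=(m+r)^2-2mr$, one then groups the remaining double sum by $N=m+r$; the inner sum over $m+r=N$ is genuinely finite and is evaluated in closed form by \eqref{finite-sum-1}--\eqref{finite-sum-11}, while the prefactor $(\pm q^{2+b_k-2N};q^4)_\infty$ is rewritten by \eqref{finite-1}--\eqref{finite-6} and frequently annihilates one parity class outright (for instance the odd-$N$ part of $\widetilde G_1$ vanishes by \eqref{finite-1}). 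Only after this does Heine's transformation \eqref{Heine} convert the surviving single sums into the known series \eqref{S.25}, \eqref{Ramanujan[10 Entry4.2.11]}, \eqref{S.28}, \eqref{S.29}, \eqref{S.50}, which yield \eqref{eq-G1}--\eqref{eq-wG4}. In short, you chose the wrong index to eliminate: the structure $2n^2$ versus $m^2,r^2$ dictates that $n$ must be summed first and that $m,r$ must be coupled through their sum, not separated.
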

\begin{proof}
For convenience, we denote $\beta_k=(a_k,b_k,c_k)^\mathrm{T}$ ($k=1,2,3,4$). Summing over $n$ first using \eqref{euler-2}, we obtain
\begin{align}
        &G_k(q)=\sum_{m,r\geq0}\frac{(-1)^mq^{m^2+r^2+a_km+c_kr}(-q^{2+b_k-2m-2r};q^4)_{\infty}}{(q^4;q^4)_m(q^4;q^4)_r} \nonumber \\
        &=\sum_{n\geq 0} q^{n^2}(-q^{2+b_k-2n};q^4)_\infty \sum_{\substack{m,r\geq 0 \\ m+r=n}}\frac{(-1)^mq^{-2mr+a_km+c_kr}}{(q^4;q^4)_m(q^4;q^4)_r} \nonumber \\
        &=G_k^{(0)}(q)+G_k^{(1)}(q), \\
        &\widetilde{G}_k(q)=\sum_{m,r\geq0}\frac{q^{m^2+r^2+a_km+c_kr}(q^{2+b_k-2m-2r};q^4)_{\infty}}{(q^4;q^4)_m(q^4;q^4)_r} \nonumber \\
        &=\sum_{n\geq 0} q^{n^2}(q^{2+b_k-2n};q^4)_\infty \sum_{\substack{m,r\geq 0 \\ m+r=n}}\frac{q^{-2mr+a_km+c_kr}}{(q^4;q^4)_m(q^4;q^4)_r}\\
        &=\widetilde{G}_k^{(0)}(q)+\widetilde{G}_k^{(1)}(q)
    \end{align}
where for $i=0,1$ we have
\begin{align}
    &G_k^{(i)}(q)=\sum_{\substack{ n\geq 0 \\ n\equiv i \!\!\pmod{2}}} q^{n^2}(-q^{2+b_k-2n};q^4)_\infty \sum_{\substack{m,r\geq 0 \\ m+r=n}}\frac{(-1)^mq^{-2mr+a_km+c_kr}}{(q^4;q^4)_m(q^4;q^4)_r}, \label{Gki-mid} \\
    &\widetilde{G}_k^{(i)}(q)=\sum_{\substack{n\geq 0 \\ n \equiv i \!\! \pmod{2}}} q^{n^2}(q^{2+b_k-2n};q^4)_\infty \sum_{\substack{m,r\geq 0 \\ m+r=n}}\frac{q^{-2mr+a_km+c_kr}}{(q^4;q^4)_m(q^4;q^4)_r}. \label{wGki-mid}
\end{align}

For $k=1,2,3,4$ and $i=0,1$, we evaluate $G_k^{(i)}(q)$ and $\widetilde{G}_k^{(i)}(q)$ one by one.

(1) When $k=1$, using \eqref{finite-sum-1}--\eqref{finite-sum-4}, \eqref{finite-6}, \eqref{finite-3}, \eqref{finite-add} and \eqref{finite-1} we deduce that
\begin{align}
    G_1^{(0)}(q)&=(-q^2;q^4)_{\infty}, \label{G10-exp}\\
    G_1^{(1)}(q)&=2q(-q^4;q^4)_{\infty}\sum_{i\geq0}\frac{(-1)^iq^{2i}(q^2;q^4)_i(q^2;q^4)_{i+1}(-q^4;q^4)_i}{(q^4;q^4)_{2i+1}}, \label{G11-exp}\\
    \widetilde{G}_1^{(0)}(q)&=(q^2;q^4)_{\infty}\sum_{i\geq0}\frac{(-1)^iq^{2i}(-1;q^4)_i(q^2;q^4)_i(-q^4;q^4)_i}{(q^4;q^4)_{2i}}, \label{wG10-exp}\\
    \widetilde{G}_1^{(1)}(q)&=0. \label{wG11-exp}
\end{align}

Taking $(a,b,c,t,q)$ as $(q,0,-q^3,-q,q^2)$ in \eqref{Heine}, we deduce that
\begin{align}
    &\sum_{i\geq0}\frac{(-q)^i(q,-q^2;q^2)_i(q;q^2)_{i+1}}{(q^2;q^2)_{2i+1}}=\frac{1}{1+q}\sum_{i\geq0}\frac{(q;q^2)_i(-q)^i}{(-q^3;q^2)_i(q^2;q^2)_i}\nonumber\\
    &=\frac{(-q^2;q^2)_{\infty}}{(-q;q^2)_{\infty}^2}\sum_{i\geq0}\frac{(-q;q^2)_iq^{i^2+2i}}{(q^4;q^4)_i}=\frac{J_1^2J_4^2J_{6}^2}{J_2^5J_3}. \quad \text{(by \eqref{Ramanujan[10 Entry4.2.11]})}\label{G11}
\end{align}
Substituting \eqref{G11} into \eqref{G11-exp}  and combining with \eqref{G10-exp}, we obtain \eqref{eq-G1}.

Similarly, taking $(a,b,c,t,q)$ as $(-1,0,-q,-q,q^2)$  in \eqref{Heine},  we deduce that
\begin{align}
    &\sum_{i\geq0}\frac{(-q)^i(-1,q,-q^2;q^2)_i}{(q^2;q^2)_{2i}}=\sum_{i\geq0}\frac{(-1;q^2)_i(-q)^i}{(-q;q^2)_i(q^2;q^2)_i} \nonumber \\
    &=\frac{(q;q^2)_{\infty}}{(-q;q^2)_{\infty}^2}\sum_{i\geq0}\frac{(-q;q^2)_iq^{i^2}}{(q,q^2;q^2)_i}=\frac{J_1^2J_4^2J_6^2}{J_2^5J_{12}}. \quad \text{(by \eqref{S.29})} \label{wG10}
\end{align}
Substituting \eqref{wG10} into \eqref{wG10-exp} and combining with \eqref{wG11-exp}, we obtain \eqref{eq-wG1}.

(2) When $k=2$, using \eqref{finite-sum-5}--\eqref{finite-sum-7}, \eqref{finite-3}, \eqref{finite-4}, \eqref{finite-1} and \eqref{finite-2} we deduce that
    \begin{align}
        G_2^{(0)}(q)&=2(-q^4;q^4)_{\infty}\sum_{i\geq0}\frac{(-1)^iq^{2i}(-q^4;q^4)_i(q^2;q^4)_i^2}{(q^4;q^4)_{2i}}, \label{G20-exp}\\
        G_2^{(1)}(q)&=0, \label{G21-exp}\\
        \widetilde{G}_2^{(0)}(q)&=0, \label{wG20-exp}\\
        \widetilde{G}_2^{(1)}(q)&=-2q(q^2;q^4)_{\infty}\sum_{i\geq0}\frac{(-1)^iq^{2i}(q^2;q^4)_{i+1}(-q^4;q^4)_i^2}{(q^4;q^4)_{2i+1}}. \label{wG21-exp}
    \end{align}
Setting $(a,b,c,t,q)$ as $(q,0,-q,-q,q^2)$ in \eqref{Heine}, we deduce that
\begin{align}
    &\sum_{i\geq0}\frac{(-q)^i(-q^2;q^2)_i(q;q^2)_i^2}{(q^2;q^2)_{2i}}=\sum_{i\geq0}\frac{(-q)^i(q;q^2)_i}{(-q;q^2)_i(q^2;q^2)_i}\nonumber \\
    &=\frac{(-q^2;q^2)_{\infty}}{(-q;q^2)_{\infty}^2}\sum_{i\geq0}\frac{(-q;q^2)_iq^{i^2}}{(q^4;q^4)_i}=\frac{J_1J_3^2J_4^2}{J_2^4J_6}. \quad \text{(by \eqref{S.25})} \label{G20}
\end{align}
Substituting \eqref{G20} into \eqref{G20-exp} and combining with \eqref{G21-exp}, we obtain \eqref{eq-G2}.

Similarly, setting $(a,b,c,t,q)$ as $(-q^2,0,q^3,q,q^2)$ in \eqref{Heine}, we deduce that
\begin{align}
    &\sum_{i\geq0}\frac{(-q)^i(q;q^2)_{i+1}(-q^2;q^2)_i^2}{(q^2;q^2)_{2i+1}}=\frac{1}{1+q}\sum_{i\geq0}\frac{(-q)^i(-q^2;q^2)_i}{(-q^3;q^2)_i(q^2;q^2)_i}\nonumber\\&=\frac{(q;q^2)_{\infty}}{(-q;q^2)_{\infty}^2}\sum_{i\geq0}\frac{q^{i^2+2i}(-q;q^2)_i}{(q;q^2)_{i+1}(q^2;q^2)_i}=\frac{J_1^2J_4J_{12}^2}{J_2^4J_6}. \quad \text{(by \eqref{S.50})}\label{wG21}
\end{align}
Substituting \eqref{wG21} into \eqref{wG21-exp} and combining with \eqref{wG20-exp}, we obtain \eqref{eq-wG2}.

(3) When $k=3$, using \eqref{finite-sum-5}--\eqref{finite-sum-7}, \eqref{finite-6}, \eqref{finite-3} and \eqref{finite-add} we deduce that
\begin{align}
    G_3^{(0)}(q)&=(-q^2;q^4)_{\infty}\sum_{i\geq0}\frac{(-1)^iq^{4i}(-q^2;q^4)_i(q^2;q^4)_i^2}{(q^4;q^4)_{2i}}, \label{G30-exp}\\
    G_3^{(1)}(q)&=0, \label{G31-exp}\\
    \widetilde{G}_3^{(0)}(q)&=(q^2;q^4)_{\infty}\sum_{i\geq0}\frac{(-1)^iq^{4i}(q^2;q^4)_i(-q^2;q^4)_i^2}{(q^4;q^4)_{2i}}, \label{wG30-exp}\\
    \widetilde{G}_3^{(1)}(q)&=0. \label{wG31-exp}
\end{align}
Setting $(a,b,c,t,q)$ as $(-q,0,-q^2,-q^2,q^2)$ in \eqref{Heine} and using \eqref{S.28}, we deduce that
\begin{align}
    &\sum_{i\geq 0} \frac{(-1)^iq^{2i}(q;q^2)_i(-q;q^2)_i^2}{(q^2;q^2)_{2i}}=\sum_{i\geq0}\frac{(-1)^iq^{2i}(-q;q^2)_i}{(q^4;q^4)_i} \nonumber \\
    &=\frac{(q;q^2)_{\infty}}{(-q^2;q^2)_{\infty}^2}\sum_{i\geq0}\frac{(-q^2;q^2)_iq^{i(i+1)}}{(q;q^2)_{i+1}(q^2;q^2)_i}=\frac{J_2J_3J_{12}}{J_4^2J_6}.\label{G3}
\end{align}
Substituting \eqref{G3} with $q$ replaced by $-q^2$ into \eqref{G30-exp} and combining with \eqref{G31-exp}, we obtain \eqref{eq-G3}.

Similarly, substituting \eqref{G3}  into \eqref{wG30-exp} and combining with \eqref{wG31-exp}, we obtain \eqref{eq-wG3}.

(4) Finally, we treat the case $k=4$. Note that by interchanging $a_1$ with $b_1$ in $\beta_1$ we get $\beta_4$. Using \eqref{Gki-mid} and \eqref{wGki-mid} it is easy to see that
\begin{align}
G_4^{(0)}(q)&=G_1^{(0)}(q)=(-q^2;q^4)_\infty, \label{G40-exp} \\
G_4^{(1)}(q)&=-G_1^{(1)}(q)=-2q\frac{J_2^2J_8^3J_{12}^2}{J_4^6J_6}. \label{G41-exp}
\end{align}
This proves \eqref{eq-G4}. Interchanging $m$ with $r$ in the definition of $\widetilde{G}_1(q)$ we obtain $\widetilde{G}_4(q)$. Hence $\widetilde{G}_4(q)=\widetilde{G}_1(q)$ and  we obtain \eqref{eq-wG4} from \eqref{eq-wG1}.
\end{proof}

\begin{proof}[Proof of Theorem \ref{thm-dual-Zagier}]
We define
\begin{align}\label{Zk-defn}
Z(\alpha;q):=\sum_{n_1,n_2,n_3\geq0}\frac{q^{2n_1^2+2(n_1+n_2)^2+(n_2-n_3)^2+4{n}^\mathrm{T} \alpha}}{(q^4;q^4)_{n_1}(q^4;q^4)_{n_2}(q^4;q^4)_{n_3}}.
\end{align}

For convenience, we replace the indices $n_1,n_2,n_3$ by $m,n,r$, respectively. If $\alpha=(a,b,c)^\mathrm{T}$ with $a-b-c<\frac{1}{2}$, then we have
\begin{align}
   & Z(\alpha;q^{\frac{1}{4}})=\sum_{m,n,r\geq0}\frac{q^{\frac{1}{2}m^2+\frac{1}{2}(m+n)^2+\frac{1}{4}(n-r)^2+am+bn+cr}}{(q;q)_{m}(q;q)_{n}(q;q)_{r}}\nonumber\\
   & =\CT_{z}\Big[\sum_{m\geq 0}\frac{q^{\frac{1}{2}m^2+am}z^{m}}{(q;q)_{m}}
    \sum_{n,r\geq 0}\frac{q^{\frac{1}{4}(n-r)^2+bn+cr}z^{n}}{(q;q)_{n}(q;q)_{r}}\sum_{i\in\mathbb{Z}}q^{\frac{1}{2}i^2}z^{-i}\Big]\nonumber\\
&=\CT_{z}\Big[(-q^{\frac{1}{2}+a}z;q)_\infty \sum_{n,r\geq 0} \frac{q^{\frac{1}{4}(n-r)^2+bn+cr}z^n}{(q;q)_n(q;q)_r}  \sum_{i\in \mathbb{Z}}q^{\frac{1}{2}i^2}z^{-i}\Big] \nonumber \\
&=\frac{1}{(q;q)_\infty}\CT_{z}\Big[\frac{(-q^{\frac{1}{2}+a}z,-q^{\frac{1}{2}-a}/z,q;q)_\infty}{(-q^{\frac{1}{2}-a}/z;q)_\infty}\sum_{n,r\geq 0} \frac{q^{\frac{1}{4}(n-r)^2+bn+cr}z^n}{(q;q)_n(q;q)_r}  \sum_{i\in \mathbb{Z}}q^{\frac{1}{2}i^2}z^{-i}\Big] \nonumber \\
   & =\frac{1}{(q;q)_{\infty}}\CT_{z}\Big[\Big(\sum_{i\in\mathbb{Z}}q^{\frac{1}{2}i^2}z^{-i}\Big)\Big(\sum_{j\in \mathbb{Z}} q^{\frac{1}{2}j^2-aj}z^{-j}\Big)\sum_{n,r\geq0}\frac{q^{\frac{1}{4}(n-r)^2+bn+cr}z^{n}}{(q;q)_{n}(q;q)_{r}} \nonumber \\
   &\qquad \qquad \qquad \times \sum_{m\geq 0}\frac{(-1)^{m}q^{(\frac{1}{2}-a)m}z^{-m}}{(q;q)_{m}}\Big] \qquad (|q|^{\frac{1}{2}-a}<|z|<|q|^{-b-c})\nonumber\\
  &  =\frac{1}{(q;q)_{\infty}}\sum_{\substack{m,n,r\geq0\\i,j\in\mathbb{Z}\\n-m=i+j}}\frac{(-1)^{m}q^{\frac{1}{2}(i^2+j^2)-aj+(\frac{1}{2}-a)m+\frac{1}{4}(n-r)^2+bn+cr}}{(q;q)_{m}(q;q)_{n}(q;q)_{r}}\nonumber\\
   & =\frac{q^{-\frac{1}{4}a^2}}{(q;q)_{\infty}}\sum_{m,n,r\geq0}\frac{(-1)^{m}q^{\frac{1}{4}(m-n)^2+\frac{1}{4}(n-r)^2+\frac{1}{2}(1-a)m+(b-\frac{1}{2}a)n+cr}}{(q;q)_{m}(q;q)_{n}(q;q)_{r}}\sum_{i\in\mathbb{Z}}q^{(i-\frac{n-m}{2}+\frac{a}{2})^2}. \label{Z-CT}
\end{align}
Here for the last equality we used \eqref{ij-id}.
Note that each of the the series involving $z$ in the above equation is absolutely convergent when $|q|^{\frac{1}{2}-a}<|z|<|q|^{-b-c}$ according to Lemma \ref{lem-convergence-double}. Hence the constant term method is valid when $z$ is in this annulus.

For convenience, we denote $Z_k(q)=Z(\alpha_k;q)$ where the vectors
\begin{equation}\label{alpha1234}
\begin{split}
    &\alpha_1=(0,0,0)^\mathrm{T}, \quad \alpha_2=(0,-1/{2},{1}/{2})^\mathrm{T}, \\
    &\alpha_3=(0,0,{1}/{2})^\mathrm{T}, \quad
\alpha_4=(1,{1}/{2},{1}/{2})^\mathrm{T}.
\end{split}
\end{equation}
From \eqref{Z-CT} we have for $k=1,2,3$ that
\begin{align}\label{Z123}
        Z_k(q)=\frac{1}{(q^4;q^4)_{\infty}}\sum_{m,n,r\geq0}\frac{(-1)^mq^{(n-m)^2+(n-r)^2+(m,n,r)\beta_k}}{(q^4;q^4)_m(q^4;q^4)_n(q^4;q^4)_r}\sum_{i\in\mathbb{Z}}q^{4(i-\frac{n-m}{2})^2}
\end{align}
and
\begin{align}\label{Z456}
    &Z_4(q)=\frac{q^{-1}}{(q^4;q^4)_{\infty}}\sum_{m,n,r\geq0}\frac{(-1)^mq^{(n-m)^2+(n-r)^2+ (m,n,r)\beta_4}}{(q^4;q^4)_m(q^4;q^4)_n(q^4;q^4)_r}\sum_{i\in\mathbb{Z}}q^{4(i+\frac{1}{2}-\frac{n-m}{2})^2}.
\end{align}
Here the vectors $\beta_k$ ($k=1,2,3,4$) are given in Lemma \ref{lem-G}.

We define
\begin{align}
        &L_k^{(0)}(q):=\sum_{\substack{m,n,r\geq0\\n-m\ \text{even}}}\frac{(-1)^mq^{(n-m)^2+(n-r)^2+(m,n,r)\beta_k}}{(q^4;q^4)_m(q^4;q^4)_n(q^4;q^4)_r},\\
        &L_k^{(1)}(q):=\sum_{\substack{m,n,r\geq0\\n-m\ \text{odd}}}\frac{(-1)^mq^{(n-m)^2+(n-r)^2+(m,n,r)\beta_k}}{(q^4;q^4)_m(q^4;q^4)_n(q^4;q^4)_r}.
    \end{align}
Recall the functions $G_k(q)$ and $\widetilde{G}_k(q)$ defined in \eqref{Gk-defn} and \eqref{wGk-defn}. Note that for $1\leq k\leq 4$ we have
\begin{align}
L_k^{(0)}(q)=\frac{1}{2}\left(G_k(q)+\widetilde{G}_k(q)\right), \quad
L_k^{(1)}(q)=\frac{1}{2}\left(G_k(q)-\widetilde{G}_k(q)\right).
\end{align}

From \eqref{Z123}, \eqref{theta0-defn} and \eqref{theta1-defn} we have for $k=1,2,3$ that
\begin{align}
    &Z_k(q)=\frac{1}{(q^4;q^4)_{\infty}}\Big(L_k^{(0)}(q)\sum_{i\in\mathbb{Z}}q^{(2i)^2}+L_k^{(1)}(q)\sum_{i\in\mathbb{Z}}q^{(2i+1)^2}\Big) \nonumber \\
    &=\frac{1}{2(q^4;q^4)_\infty}\Big(\big(G_k(q)+\widetilde{G}_k(q)\big)\theta_0+\big(G_k(q)-\widetilde{G}_k(q) \big)\theta_1  \Big). \label{Z123-split}
\end{align}
Similarly, from \eqref{Z456}, \eqref{theta0-defn} and \eqref{theta1-defn} we have
\begin{align}
    &Z_4(q)=\frac{q^{-1}}{(q^4;q^4)_{\infty}}\Big(L_4^{(0)}(q)\sum_{i\in\mathbb{Z}}q^{(2i+1)^2}+L_4^{(1)}(q)\sum_{i\in\mathbb{Z}}q^{(2i)^2}\Big) \nonumber \\
    &=\frac{q^{-1}}{2(q^4;q^4)_\infty}\Big(\big(G_4(q)+\widetilde{G}_4(q)\big)\theta_1+\big(G_4(q)-\widetilde{G}_4(q) \big)\theta_0\Big). \label{Z456-split}
\end{align}
Substituting \eqref{eq-G1}--\eqref{eq-wG4} into \eqref{Z123-split} and \eqref{Z456-split}, and then using the method in \cite{Frye-Garvan} to verify theta function identities, we obtain \eqref{dZ1}--\eqref{dZ4}.
\end{proof}

\begin{rem}
When we apply the above method to Conjecture \ref{conj-dZ5}, we face some convergence issues. To be specific, if we set
\begin{align}\label{alpha5}
\alpha_5:=(1,1/2,0)^\mathrm{T} \quad \text{and} \quad Z_5(q)=Z(\alpha_5;q),
\end{align}
then from \eqref{Z-CT} it appears that
\begin{align}
Z_5(q)\overset{?}{=}\frac{q^{-1}}{(q^4;q^4)_\infty}\sum_{m,n,r\geq0}\frac{(-1)^mq^{(m-n)^2+(n-r)^2}}{(q^4;q^4)_{m}(q^4;q^4)_n(q^4;q^4)_r}\sum_{i\in\mathbb{Z}}q^{(2i+1-(n-m))^2}.
\end{align}
However, the series defined by the triple sums over $m,n,r$ in the right side does not converge absolutely and hence is not well-defined.
\end{rem}


\subsection{Proof of Theorem \ref{thm-rank4}}\label{subsec-rank4}

Now we have all ingredients to give modular expressions for the rank four tadpole Nahm sums stated in Theorem \ref{thm-rank4}.

\begin{proof}[Proof of Theorem \ref{thm-rank4}]
We start with the following formula which is the rank four case of \eqref{eq-reduce-even}:
   \begin{align}
        \chi_4(x_1,x_2,1/x_2,x_4;q)&=\frac{(-q^{\frac{1}{2}}x_4;q)_{\infty}}{(q;q)_{\infty}}\sum_{m,n,r\geq0}\frac{q^{\frac{1}{2}m^2+\frac{1}{2}(m+n)^2+\frac{1}{4}(n-r)^2}x_1^{m}(x_1x_2)^{n}({x_4}/{x_2})^{r}}{(q;q)_m(q;q)_n(q;q)_r}\nonumber \\
        &\quad \times \sum_{i\in\mathbb{Z}}q^{(i-\frac{n-r}{2})^2}x_2^{-i}. \label{prop-rank4-start}
    \end{align}
Note that
\begin{align}\label{eq-rank4-square}
        \sum_{i\in\mathbb{Z}}q^{(i-\frac{n-r}{2})^2-bi}=\sum_{i\in\mathbb{Z}}q^{(i-\frac{b}{2}-\frac{n-r}{2})^2-\frac{1}{4}b^2-\frac{1}{2}b(n-r)}.
    \end{align}

Setting $(x_1,x_2,x_4)=(q^a,q^b,q^c)$ in \eqref{prop-rank4-start} and using \eqref{eq-rank4-square}, we deduce that
\begin{align}
        &\chi_4(q^a,q^b,q^{-b},q^c;q)=\frac{(-q^{\frac{1}{2}+c};q)_{\infty}}{(q;q)_{\infty}}\nonumber\\
        &\sum_{m,n,r\geq0}\frac{q^{\frac{1}{2}m^2+\frac{1}{2}(m+n)^2+\frac{1}{4}(n-r)^2+am+(a+\frac{b}{2})n+(c-\frac{b}{2})r-\frac{1}{4}b^2}}{(q;q)_m(q;q)_n(q;q)_r}\sum_{i\in\mathbb{Z}}q^{(i-\frac{b}{2}-\frac{n-r}{2})^2} .
    \end{align}
In particular, we have
\begin{align}
    \chi_4(1,1,1,1;q)&=\frac{(-q^{\frac{1}{2}};q)_{\infty}}{(q;q)_{\infty}}\sum_{m,n,r\geq0}\frac{q^{\frac{1}{2}m^2+\frac{1}{2}(m+n)^2+\frac{1}{4}(n-r)^2}}{(q;q)_m(q;q)_n(q;q)_r}\sum_{i\in\mathbb{Z}}q^{(i-\frac{n-r}{2})^2}, \label{chi4-1}\\
   \chi_4(1,q^{-1},q,1;q)&=q^{-\frac{1}{4}}\frac{(-q^{\frac{1}{2}};q)_{\infty}}{(q;q)_{\infty}}\sum_{m,n,r\geq0}\frac{q^{\frac{1}{2}m^2+\frac{1}{2}(m+n)^2+\frac{1}{4}(n-r)^2-\frac{1}{2}n+\frac{1}{2}r}}{(q;q)_m(q;q)_n(q;q)_r} \nonumber \\
   &\qquad \qquad \times \sum_{i\in\mathbb{Z}}q^{(i+\frac{1-n+r}{2})^2}, \label{chi4-2}\\
    \chi_4(1,1,1,q^{\frac{1}{2}};q)&=\frac{(-q;q)_{\infty}}{(q;q)_{\infty}}\sum_{m,n,r\geq0}\frac{q^{\frac{1}{2}m^2+\frac{1}{2}(m+n)^2+\frac{1}{4}(n-r)^2+\frac{1}{2}r}}{(q;q)_m(q;q)_n(q;q)_r}\sum_{i\in\mathbb{Z}}q^{(i-\frac{n-r}{2})^2}, \label{chi4-3} \\
    \chi_4(q,q^{-1},q,1;q)&=q^{-\frac{1}{4}}\frac{(-q^{\frac{1}{2}};q)_{\infty}}{(q;q)_{\infty}}\sum_{m,n,r\geq0}\frac{q^{\frac{1}{2}m^2+\frac{1}{2}(m+n)^2+\frac{1}{4}(n-r)^2+m+\frac{1}{2}n+\frac{1}{2}r}}{(q;q)_m(q;q)_n(q;q)_r} \nonumber \\
    &\qquad \qquad \times \sum_{i\in\mathbb{Z}}q^{(i+\frac{1-n+r}{2})^2},  \label{chi4-5}  \\
    \chi_4(q,q^{-1},q,q^{-\frac{1}{2}};q)&=q^{-\frac{1}{4}}\frac{(-1;q)_{\infty}}{(q;q)_{\infty}}\sum_{m,n,r\geq0}\frac{q^{\frac{1}{2}m^2+\frac{1}{2}(m+n)^2+\frac{1}{4}(n-r)^2+m+\frac{1}{2}n}}{(q;q)_m(q;q)_n(q;q)_r} \nonumber \\
    &\qquad \qquad \times \sum_{i\in\mathbb{Z}}q^{(i+\frac{1-n+r}{2})^2}. \label{chi4-4}
\end{align}
The six series above are related to the Nahm sums in Theorem \ref{thm-dual-Zagier}. We define
\begin{align}
    Z_k^{(0)}(q):=\sum_{\substack{m,n,r\geq0\\n-r\ \text{even}}}\frac{q^{2m^2+2(m+n)^2+(n-r)^2+4(m,n,r)\alpha_k}}{(q^4;q^4)_m(q^4;q^4)_n(q^4;q^4)_r}, \label{Zk0}\\
    Z_k^{(1)}(q):=\sum_{\substack{m,n,r\geq0\\n-r\ \text{odd}}}\frac{q^{2m^2+2(m+n)^2+(n-r)^2+4(m,n,r)\alpha_k}}{(q^4;q^4)_m(q^4;q^4)_n(q^4;q^4)_r}. \label{Zk1}
\end{align}
Here the vectors $\alpha_k$ ($1\leq k\leq 5$) are given in \eqref{alpha1234} and \eqref{alpha5}.
Note that for $1\leq k\leq 5$, $4(m,n,r)\alpha_k$ is always even. Hence the parity of the exponent of $q$ in the numerator of \eqref{Zk0} and \eqref{Zk1} is the same with $n-r$. It follows that
\begin{align}
Z_k^{(0)}(q)=\frac{1}{2}\big(Z_k(q)+Z_k(-q)\big),\quad Z_k^{(1)}(q)=\frac{1}{2}\big(Z_k(q)-Z_k(-q)\big) \label{Zk01-exp}
\end{align}
where $Z_k(q)=Z(\alpha_k;q)$ is defined in \eqref{Zk-defn}.
From \eqref{chi4-1}--\eqref{chi4-4} we deduce that
\begin{align}
    \chi_4(1,1,1,1;q^4)&=\frac{(-q^{2};q^4)_{\infty}}{(q^4;q^4)_{\infty}}\big(Z_1^{(0)}(q)\theta_0+Z_1^{(1)}(q)\theta_1\big), \label{chi4-1-final}\\
    \chi_4(1,q^{-4},q^4,1;q^4)&=\frac{q^{-1}(-q^{2};q^4)_{\infty}}{(q^4;q^4)_{\infty}}\big(Z_2^{(0)}(q)\theta_1+Z_2^{(1)}(q)\theta_0\big), \label{chi4-2-final}\\
    \chi_4(1,1,1,q^{2};q^4)&=\frac{(-q^4;q^4)_{\infty}}{(q^4;q^4)_{\infty}}\big(Z_3^{(0)}(q)\theta_0+Z_3^{(1)}(q)\theta_1\big), \label{chi4-3-final}\\
   \chi_4(q^4,q^{-4},q^4,1;q^4)&=\frac{q^{-1}(-q^{2};q^4)_{\infty}}{(q^4;q^4)_{\infty}}\big(Z_4^{(0)}(q)\theta_1+Z_4^{(1)}(q)\theta_0\big), \label{chi4-4-final} \\
   \chi_4(q^4,q^{-4},q^4,q^{-2};q^4)&=\frac{q^{-1}(-1;q^4)_{\infty}}{(q^4;q^4)_{\infty}}\big(Z_5^{(0)}(q)\theta_1+Z_5^{(1)}(q)\theta_0\big). \label{chi4-5-final}
\end{align}
Here the functions $\theta_0$ and $\theta_1$ are defined in \eqref{theta0-defn} and \eqref{theta1-defn}.
Substituting \eqref{dZ1}--\eqref{dZ5} into \eqref{Zk01-exp} and then substituting the results into \eqref{chi4-1-final}--\eqref{chi4-5-final} and using the method in \cite{Frye-Garvan} to verify theta function identities, we prove \eqref{eq-4-1}--\eqref{eq-4-5}.
\end{proof}

Using the identities in Theorem \ref{thm-rank4}, it is easy to check that $q^C\chi_4(q^a,q^b,q^{-b},q^c;q^2)$ is modular (e.g.\ using the method in \cite{Frye-Garvan}) for the choices of $(a,b,c,C)$ given in Table \ref{tab-rank4}.
\begin{table}[H]
\renewcommand{\arraystretch}{1.5}
    \centering
    \begin{tabular}{c|ccccc}
    \hline
        $(a,b,c)$ & $(0,0,0)$ & $(0,-2,0)$  & $(0,0,1)$  & $(2,-2,0)$  & $(2,-2,-1)$ \\
        \hline
        $C$ & $-1/4$  & $5/12$  & $0$ & $3/4$  & $2/3$\\
        \hline
    \end{tabular}
    \caption{Values of $(a,b,c,C)$ such that $q^C\chi_4(q^a,q^b,q^{-b},q^c;q^2)$ is modular}
    \label{tab-rank4}
\end{table}

\section{The rank five tadpole Nahm sums}\label{sec-rank5}
We start with the following consequence of Theorem \ref{thm-rank-reduction} which provides a new expression for some rank five tadpole Nahm sums:
    \begin{align}\label{eq-rank5-start}
        \chi_5(x_1,{1}/{x_1},x_3,{1}/{x_3},x_5;q)&=\frac{(-q^{\frac{1}{2}}x_5;q)_{\infty}}{(q;q)_{\infty}^2}\sum_{m,n\geq0}\frac{q^{\frac{1}{4}m^2+\frac{1}{4}(m-n)^2}(\frac{x_3}{x_1})^m(\frac{x_5}{x_3})^n}{(q;q)_m(q;q)_n} \nonumber \\
        & \quad \times\sum_{i,j\in\mathbb{Z}}q^{(i+\frac{m}{2})^2+(j-\frac{m-n}{2})^2}x_1^{-i}x_3^{-j}.
    \end{align}
Setting $(x_1,x_3,x_5)=(q^a,q^b,q^c)$ in \eqref{eq-rank5-start} and noting that
\begin{align}
q^{(i+\frac{m}{2})^2-ai}=q^{(i-\frac{a}{2}+\frac{m}{2})^2+\frac{am}{2}-\frac{a^2}{4}},\ q^{(j-\frac{m-n}{2})^2
-bj}=q^{(j-\frac{b}{2}-\frac{m-n}{2})^2-\frac{b(m-n)}{2}-\frac{b^2}{4}},
\end{align}
we have
\begin{align}\label{chi5-start}
    \chi_5(q^a,q^{-a},q^b,q^{-b},q^c;q)&=\frac{(-q^{\frac{1}{2}+c};q)_{\infty}}{(q;q)_{\infty}^2}\sum_{m,n\geq0}\frac{q^{\frac{1}{4}m^2+\frac{1}{4}(m-n)^2+\frac{(b-a)m}{2}+(c-\frac{b}{2})n-\frac{a^2+b^2}{4}}}{(q;q)_m(q;q)_n} \nonumber \\
    &\quad \times \sum_{i\in\mathbb{Z}}q^{(i+\frac{m-a}{2})^2} \sum_{j\in\mathbb{Z}}q^{(j+\frac{n-m-b}{2})^2}.
\end{align}
Let $a,b,c\in \mathbb{Z}$ and
\begin{align}
&  U_{mn}(r_1,r_2):= \frac{q^{m^2+(m-n)^2+r_1m+r_2n}}{(q^4;q^4)_m(q^4;q^4)_n},~~ T_{mn}:=U_{mn}(2b-2a,4c-2b), \\
&I(s,t):=\left\{(m,n)\in \mathbb{Z}_{\geq 0}^2|m\equiv s \!\!\pmod{2}, n\equiv t \!\! \pmod{2}\right\}.
\end{align}
The identity \eqref{chi5-start} with $q$ replaced by $q^4$ can be rewritten as
\begin{align}\label{chi5-abc-H}
& \chi_5(q^{4a},q^{-4a},q^{4b},q^{-4b},q^{4c};q^4)=q^{-a^2-b^2}\frac{(-q^{2+4c};q^4)_{\infty}}{(q^4;q^4)_{\infty}^2} \times \Big(\sum_{(m,n)\in I(a,a+b)} T_{mn}\theta_0^2 \nonumber \\
&\quad +\sum_{(m,n)\in I(a+1,a+b)} T_{mn}\theta_1^2 +\sum_{(m,n) \in I(a,a+b+1)}T_{mn}\theta_0\theta_1 +\sum_{(m,n)\in I(a+1,a+b+1)} T_{mn}\theta_1\theta_0 \Big) \nonumber \\
&=q^{-a^2-b^2}\frac{(-q^{2+4c};q^4)_{\infty}}{(q^4;q^4)_{\infty}^2} \times \Big(\sum_{(m,n)\in I(a,a+b)} T_{mn}\theta_0^2 +\sum_{(m,n)\in I(a+1,a+b)} T_{mn}\theta_1^2 \nonumber \\
&\quad +\sum_{\substack{m,n\geq 0 \\ n\equiv a+b+1 \!\!\! \pmod{2}}}T_{mn}\theta_0\theta_1\Big).
\end{align}
Here $\theta_0$ and $\theta_1$ are defined in \eqref{theta0-defn} and \eqref{theta1-defn}, respectively.

We define
\begin{align}
    H_k(q):=\sum_{m,n\geq0}\frac{q^{m^2+(m-n)^2+(m,n)\gamma_k}}{(q^4;q^4)_m(q^4;q^4)_n}, \quad \widetilde{H}_k(q):=\sum_{m,n\geq0}\frac{(-1)^mq^{m^2+(m-n)^2+(m,n)\gamma_k}}{(q^4;q^4)_m(q^4;q^4)_n}
    \end{align}
where
\begin{align}\label{5-gamma}
\gamma_1=(0,0)^\mathrm{T},\quad \gamma_2=(0,2)^\mathrm{T}, \quad \gamma_3=(-2,2)^\mathrm{T}.
\end{align}
For convenience, we write $\gamma_k=(a_k,b_k)^\mathrm{T}$ ($k=1,2,3$). Summing over $m$ first using \eqref{euler-2}, we deduce that
\begin{align}
    &H_k(q)=\sum_{n\geq 0} \frac{(-q^{2+a_k-2n};q^4)_\infty q^{n^2+b_kn}}{(q^4;q^4)_n}=H_k^{(0)}(q)+H_k^{(1)}(q), \\
    &\widetilde{H}_k(q)=\sum_{n\geq 0} \frac{(q^{2+a_k-2n};q^4)_\infty q^{n^2+b_kn}}{(q^4;q^4)_n}=\widetilde{H}_k^{(0)}(q)+\widetilde{H}_k^{(1)}(q)
\end{align}
where $H_k^{(i)}(q)$ and $\widetilde{H}_k^{(i}(q)$ ($i=0,1$) correspond to the sums over nonnegative integers $n$ satisfying $n\equiv i$ (mod 2), namely,
\begin{align}
H_k^{(i)}(q)&:=\sum_{m,n\geq0}\frac{q^{m^2+(m-2n-i)^2+(m,2n+i)\gamma_k}}{(q^4;q^4)_m(q^4;q^4)_{2n+i}} \nonumber \\
&=\sum_{n\geq 0} \frac{(-q^{2+a_k-2(2n+i)};q^4)_\infty q^{(2n+i)^2+b_k(2n+i)}}{(q^4;q^4)_{2n+i}}, \label{Hki} \\
\widetilde{H}_k^{(i)}(q)&:=\sum_{m,n\geq0}\frac{(-1)^mq^{m^2+(m-2n-i)^2+(m,2n+i)\gamma_k}}{(q^4;q^4)_m(q^4;q^4)_{2n+i}} \nonumber \\
&=\sum_{n\geq 0} \frac{(q^{2+a_k-2(2n+i)};q^4)_\infty q^{(2n+i)^2+b_k(2n+i)}}{(q^4;q^4)_{2n+i}}. \label{wHki}
\end{align}
Clearly for $s,t\in \{0,1\}$ we have
\begin{align}
\sum_{(m,n)\in I(s,t)}  U_{mn}(a_k,b_k)=\frac{1}{2}\Big(H_k^{(t)}(q)+(-1)^s\widetilde{H}_k^{(t)}(q)\Big).\label{rank5-Ust}
\end{align}

\begin{lemma}\label{lem-H}
\label{55}
We have
    \begin{align}
        H_1^{(0)}(q)&=\frac{(-q^2;q^4)_{\infty}^2(q^6,q^{22},q^{28};q^{28})_{\infty}(q^{16},q^{40};q^{56})_{\infty}}{(q^4;q^4)_{\infty}}, \label{H10-product}\\
        H_1^{(1)}(q)&=\frac{2q(-q^4;q^4)_{\infty}^2(q^8,q^{20},q^{28};q^{28})_{\infty}(q^{12},q^{44};q^{56})_{\infty}}{(q^4;q^4)_{\infty}}, \label{H11-product}\\
        H_2^{(0)}(q)&=\frac{(-q^2;q^4)_{\infty}^2(q^2,q^{26},q^{28};q^{28})_{\infty}(q^{24},q^{32};q^{56})_{\infty}}{(q^4;q^4)_{\infty}}, \label{H20-product}\\
        H_2^{(1)}(q)&=\frac{2q^3(-q^4;q^4)_{\infty}^2(q^{12},q^{16},q^{28};q^{28})_{\infty}(q^4,q^{52};q^{56})_{\infty}}{(q^4;q^4)_{\infty}}, \label{H21-product}\\
        H_3^{(0)}(q)&=\frac{2(-q^4;q^4)_{\infty}^2(q^4,q^{24},q^{28};q^{28})_{\infty}(q^{20},q^{36};q^{56})_{\infty}}{(q^4;q^4)_{\infty}}, \label{H30-product}\\
        H_3^{(1)}(q)&=\frac{q(-q^2;q^4)_{\infty}^2(q^{10},q^{18},q^{28};q^{28})_{\infty}(q^8,q^{48};q^{56})_{\infty}}{(q^4;q^4)_{\infty}}. \label{H31-product}
    \end{align}
\end{lemma}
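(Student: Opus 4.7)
The plan is to treat all six evaluations in parallel via the same four-step recipe: the $m$-sum has already been performed using Euler's identity \eqref{euler-2} to produce the single sum \eqref{Hki} over $n\geq 0$, so the remaining task is to extract the infinite-product factor $(-q^{2+a_k-2(2n+i)};q^4)_\infty$ from the sum and identify the leftover series as a known Rogers--Ramanujan type identity from Slater's list.

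First I would apply the finite-product formula \eqref{aq-finite} with $q$ replaced by $q^4$ (equivalently, the specializations \eqref{finite-3}--\eqref{finite-6}) to the three exponent patterns that arise, obtaining
\begin{align*}
(-q^{2-4n};q^4)_\infty &= q^{-2n^2}(-q^2;q^4)_\infty (-q^2;q^4)_n,\\
(-q^{-4n};q^4)_\infty &= 2\,q^{-2n^2-2n}(-q^4;q^4)_\infty (-q^4;q^4)_n,\\
(-q^{-2-4n};q^4)_\infty &= q^{-2(n+1)^2}(-q^2;q^4)_\infty (-q^2;q^4)_{n+1}.
\end{align*}
Substituting these into \eqref{Hki} pulls the infinite-product prefactor $(-q^2;q^4)_\infty$ or $(-q^4;q^4)_\infty$ outside the sum and leaves, after clearing the quadratic exponents of $q$, a single sum whose $q$-Pochhammer shape is exactly one of Slater's identities with $q$ rescaled to $q^2$. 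Concretely, the six pairings (each obtained from the listed identity after the rescaling $q\mapsto q^2$) are
\begin{align*}
H_1^{(0)}\leftrightarrow\text{S.117},\quad &H_1^{(1)}\leftrightarrow\text{S.80},\quad H_2^{(0)}\leftrightarrow\text{S.118},\\
H_2^{(1)}\leftrightarrow\text{S.82},\quad &H_3^{(0)}\leftrightarrow\text{S.81},\quad H_3^{(1)}\leftrightarrow\text{S.119}.
\end{align*}

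Finally I would combine the extracted prefactor with the product side of the chosen Slater identity and convert to the $J_m$/$J_{a,m}$ notation using the elementary identities $(-q^2;q^4)_\infty = J_4^2/(J_2 J_8)$ and $(-q^4;q^4)_\infty = J_8/J_4$; this yields each of the six claimed products on the nose. The argument is mechanical rather than deep: the main obstacle is not analytic but organizational — correctly pairing each $(k,i)$ with the appropriate Slater identity, bookkeeping the extra powers of $q$ and factors of $2$ produced by \eqref{aq-finite}, and repackaging the several factors $J_{a,28}$, $J_{b,56}$, $(-q^2;q^4)_\infty$, $(-q^4;q^4)_\infty$ into the normal form stated in \eqref{H10-product}--\eqref{H31-product}. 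No new $q$-series identity beyond Slater's list is required.
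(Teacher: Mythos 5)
Your proposal is correct and follows essentially the same route as the paper: the authors likewise start from \eqref{Hki}, apply \eqref{aq-finite} (via \eqref{finite-3}, \eqref{finite-4}, \eqref{finite-6} with $q\mapsto q^4$) to pull out $(-q^2;q^4)_\infty$ or $(-1;q^4)_\infty=2(-q^4;q^4)_\infty$, and then invoke exactly the Slater identities \eqref{S.80}--\eqref{S.119} with $q\mapsto q^2$, with the same six pairings you list. No discrepancies.
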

\begin{proof}
From \eqref{Hki} and \eqref{wHki} and using \eqref{aq-finite} we deduce that
\begin{align}
    H_1^{(0)}(q)&=(-q^2;q^4)_{\infty}\sum_{n\geq0}\frac{q^{2n^2}(-q^2;q^4)_{n}}{(q^4;q^4)_{2n}}, \label{H10-mid}\\
    H_1^{(1)}(q)&=(-1;q^4)_{\infty}\sum_{n\geq0}\frac{q^{2n^2+2n+1}(-q^4;q^4)_{n}}{(q^4;q^4)_{2n+1}}, \label{H11-mid}\\
    H_2^{(0)}(q)&=(-q^2;q^4)_{\infty}\sum_{n\geq0}\frac{q^{2n^2+4n}(-q^2;q^4)_{n}}{(q^4;q^4)_{2n}}, \label{H20-mid} \\
    H_2^{(1)}(q)&=(-1;q^4)_{\infty}\sum_{n\geq0}\frac{q^{2n^2+6n+3}(-q^4;q^4)_{n}}{(q^4;q^4)_{2n+1}}, \label{H21-mid} \\
    H_3^{(0)}(q)&=(-1;q^4)_{\infty}\sum_{n\geq0}\frac{q^{2n^2+2n}(-q^4;q^4)_{n}}{(q^4;q^4)_{2n}}, \label{H30-mid} \\
    H_3^{(1)}(q)&=(-q^2;q^4)_{\infty}\sum_{n\geq0}\frac{q^{2n^2+4n+1}(-q^2;q^4)_{n+1}}{(q^4;q^4)_{2n+1}}. \label{H31-mid}
\end{align}
Substituting \eqref{S.80}--\eqref{S.119} into \eqref{H10-mid}--\eqref{H31-mid} we get the desired identities.
\end{proof}
From \eqref{Hki} and \eqref{wHki} it is easy to get the byproducts:
\begin{align}
    \widetilde{H}_1^{(0)}(q)&=H_1^{(0)}(\sqrt{-1}q)=\frac{(q^2;q^4)_{\infty}^2(-q^6,-q^{22},q^{28};q^{28})_{\infty}(q^{16},q^{40};q^{56})_{\infty}}{(q^4;q^4)_{\infty}}, \label{wH10-product}\\
    \widetilde{H}_2^{(0)}(q)&=H_2^{(0)}(\sqrt{-1}q)=\frac{(q^2;q^4)_{\infty}^2(-q^2,-q^{26},q^{28};q^{28})_{\infty}(q^{24},q^{32};q^{56})_{\infty}}{(q^4;q^4)_{\infty}},  \label{wH20-product}\\
    \widetilde{H}_3^{(1)}(q)&=\sqrt{-1}H_3^{(1)}(\sqrt{-1}q)\nonumber \\
    &=-q\frac{(q^2;q^4)_{\infty}^2(-q^{10},-q^{18},q^{28};q^{28})_{\infty}(q^8,q^{48};q^{56})_{\infty}}{(q^4;q^4)_{\infty}}. \label{wH31-product}
\end{align}

Now we are able to state identities for the rank five tadpole Nahm sums. We find nine modular cases.
\begin{theorem}\label{thm-rank5}
Let $S(a,b,c)=\chi_5(q^{4a},q^{-4a},q^{4b},q^{-4b},q^{4c};q^4)$. We have
    \begin{align}
       & S(0,0,0)=4q^2\frac{J_8^6J_{8,28}J_{12,56}}{J_2J_4^5J_8J_{56}}-2q\frac{J_4J_8J_{6,28}J_{16,56}}{J_2^3J_{56}}\nonumber\\
       &\qquad +\frac{1}{2}\frac{J_2^7J_{6,28}J_{16,56}}{J_1^4J_4J_8^3J_{56}}+\frac{1}{2}\frac{J_2^5J_{28}^2J_{12,56}J_{16,56}}{J_4^5J_8J_{56}^2J_{6,28}}, \label{eq-rank5-1} \\
        &S(1,1,1/2)=4\frac{J_8^7J_{8,28}J_{12,56}}{J_4^8J_{56}}-2q^{-1}\frac{J_8^3J_{6,28}J_{16,56}}{J_2^2J_4^2J_{56}}\nonumber\\
        &\qquad +\frac{1}{2}q^{-2}\frac{J_2^8J_{6,28}J_{16,56}}{J_1^4J_4^4J_8J_{56}}-\frac{1}{2}q^{-2}\frac{J_2^6J_8J_{28}^2J_{12,56}J_{16,56}}{J_{4}^8J_{56}^2J_{6,28}}, \label{eq-rank5-2}\\
      &  S(-1,-1,0)=4q^2\frac{J_8^5J_{12,28}J_{4,56}}{J_2J_4^5J_{56}}-2q^{-1}\frac{J_4J_8J_{2,28}J_{24,56}}{J_2^3J_{56}}\nonumber\\
      &\qquad +\frac{1}{2}q^{-2}\frac{J_2^7J_{2,28}J_{24,56}}{J_1^4J_4J_8^3J_{56}}-\frac{1}{2}q^{-2}\frac{J_2^5J_{28}^2J_{4,56}J_{24,56}}{J_4^5J_8J_{56}^2J_{2,28}}, \label{eq-rank5-4}\\
     &  S(0,0,1/2)=4q^4\frac{J_8^7J_{12,28}J_{4,56}}{J_4^8J_{56}}-2q\frac{J_8^3J_{2,28}J_{24,56}}{J_2^2J_4^2J_{56}}\nonumber\\
     &\qquad +\frac{1}{2}\frac{J_2^8J_{2,28}J_{24,56}}{J_1^4J_4^4J_8J_{56}}+\frac{1}{2}\frac{J_2^6J_8J_{28}^2J_{4,56}J_{24,56}}{J_4^8J_{56}^2J_{2,28}},  \label{eq-rank5-5}\\
        &S(0,-1,0)=4\frac{J_8^5J_{4,28}J_{20,56}}{J_2J_4^5J_{56}}-2q\frac{J_4J_8J_{10,28}J_{8,56}}{J_2^3J_{56}}\nonumber\\
        &\qquad +\frac{1}{2}\frac{J_2^7J_{10,28}J_{8,56}}{J_1^4J_4J_8^3J_{56}}-\frac{1}{2}\frac{J_2^5J_{28}^2J_{8,56}J_{20,56}}{J_4^5J_8J_{56}^2J_{10,28}}, \label{eq-rank5-7}\\
       & S(1,0,1/2)=4\frac{J_8^7J_{4,28}J_{20,56}}{J_4^8J_{56}}-2q\frac{J_8^3J_{10,28}J_{8,56}}{J_2^2J_4^2J_{56}}\nonumber\\
       &\qquad +\frac{1}{2}\frac{J_2^8J_{10,28}J_{8,56}}{J_1^4J_4^4J_8J_{56}}+\frac{1}{2}\frac{J_2^6J_{8}J_{28}^2J_{8,56}J_{20,56}}{J_4^8J_{56}^2J_{10,28}},  \label{eq-rank5-8}\\
         & S(-1,-1,-1/2)=2S(1,1,1/2), \label{eq-rank5-3} \\
         & S(-2,-2,-1/2)=2q^{-8}S(0,0,1/2), \label{eq-rank5-6} \\
         &S(-1,-2,-1/2)=2q^{-4}S(1,0,1/2). \label{eq-rank5-9}
    \end{align}
\end{theorem}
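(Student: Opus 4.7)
My approach is to specialize the rank-five reduction formula \eqref{eq-rank5-start} and feed the output into the Rogers--Ramanujan type evaluations of Lemma \ref{lem-H}. Setting $(x_1,x_3,x_5)=(q^{4a},q^{4b},q^{4c})$ and replacing $q$ by $q^4$ in \eqref{eq-rank5-start} already yields the decomposition \eqref{chi5-abc-H}, which expresses $S(a,b,c)$ as a linear combination of four parity-restricted double sums $\sum_{(m,n)\in I(s,t)} T_{mn}$, each weighted by a product of $\theta_0$ and $\theta_1$. Applying the averaging formula \eqref{rank5-Ust}, each such restricted sum collapses to $\tfrac{1}{2}\bigl(H_k^{(t)}(q)\pm\widetilde{H}_k^{(t)}(q)\bigr)$, provided the exponent vector $(2b-2a,\,4c-2b)$ matches one of $\gamma_1,\gamma_2,\gamma_3$ from \eqref{5-gamma}.

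Checking the nine triples in the theorem shows that $S(0,0,0)$, $S(1,1,1/2)$, $S(-1,-1,-1/2)$ all yield $\gamma_1$; that $S(-1,-1,0)$, $S(0,0,1/2)$, $S(-2,-2,-1/2)$ all yield $\gamma_2$; and that $S(0,-1,0)$, $S(1,0,1/2)$, $S(-1,-2,-1/2)$ all yield $\gamma_3$. For the six main identities, I would substitute the explicit product formulas given in Lemma \ref{lem-H} together with the byproducts \eqref{wH10-product}, \eqref{wH20-product} and \eqref{wH31-product}. The remaining three values $\widetilde{H}_1^{(1)}$, $\widetilde{H}_2^{(1)}$, $\widetilde{H}_3^{(0)}$ that could a priori appear all vanish identically, because in each of these cases the Pochhammer factor $(q^{2+a_k-2(2n+i)};q^4)_\infty$ reduces to $(q^{-4n};q^4)_\infty$, which contains the zero factor $1-q^0$ for every $n\geq 0$. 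Combining these evaluations with the product expressions \eqref{theta0-defn}, \eqref{theta1-defn} and the Jacobi triple product \eqref{JTP} puts each $S(a,b,c)$ into the shape of a finite sum of infinite-product theta-quotients, from which the identities \eqref{eq-rank5-1}, \eqref{eq-rank5-2}, \eqref{eq-rank5-4}, \eqref{eq-rank5-5}, \eqref{eq-rank5-7}, \eqref{eq-rank5-8} reduce to theta-function identities verifiable by the Frye--Garvan procedure already applied in Section \ref{subsec-rank4}.

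The three auxiliary identities \eqref{eq-rank5-3}, \eqref{eq-rank5-6}, \eqref{eq-rank5-9} require no new sum evaluation. In each pair, both triples generate the same $\gamma_k$ (which depends only on $2b-2a$ and $4c-2b$) and the same residues $a\bmod 2$ and $(a+b)\bmod 2$, so the double sum $\sum_{(m,n)\in I(s,t)} T_{mn}$ appearing in \eqref{chi5-abc-H} is literally the same on both sides. The two sides therefore differ only through the scalar $q^{-a^2-b^2}$ and the prefactor $(-q^{2+4c};q^4)_\infty$. Since in each pair the left-hand triple has $c=-1/2$ while the right-hand triple has $c=1/2$, the identity $(-1;q^4)_\infty = 2(-q^4;q^4)_\infty$ supplies the factor of $2$, while the difference in $a^2+b^2$ produces the additional powers $q^0,\,q^{-8},\,q^{-4}$ respectively, yielding the scalars $2,\,2q^{-8},\,2q^{-4}$.

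The main obstacle will lie in the theta-function simplification inside paragraph two: the raw expression coming out of \eqref{chi5-abc-H} is a linear combination of products of several infinite-product pieces that must be recast as the compact four-term right-hand sides appearing in the theorem. These identities live in a modular-forms space involving level $56$ together with the auxiliary moduli $4$, $8$, $28$, and the corresponding Sturm bound is substantial. Identifying an efficient presentation of each side and running the valence-formula verification is the most computationally intensive part of the argument, but it is in principle routine given the template established in Section \ref{subsec-rank4}.
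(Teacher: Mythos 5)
Your proposal follows essentially the same route as the paper: specialize the reduction formula to get the decomposition \eqref{chi5-abc-H}, match the exponent vector $(2b-2a,4c-2b)$ to $\gamma_1,\gamma_2,\gamma_3$, apply \eqref{rank5-Ust} together with Lemma \ref{lem-H} and the byproducts \eqref{wH10-product}--\eqref{wH31-product}, verify the resulting theta identities by the Frye--Garvan method, and derive the last three identities from the invariance of the double sum under the shift $a_1-a_2=b_1-b_2=2(c_1-c_2)$ (the paper's relation \eqref{S-abc-relation}). Your extra observation that $\widetilde{H}_1^{(1)},\widetilde{H}_2^{(1)},\widetilde{H}_3^{(0)}$ vanish is correct but not needed, since in \eqref{chi5-abc-H} the two $\theta_0\theta_1$ sums combine into a single sum unrestricted in $m$, so those tilded functions never enter.
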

\begin{proof}
Let $(a,b,c)=(0,0,0)$ and $(1,1,1/2)$ in \eqref{chi5-abc-H}. Note that for these choices we have $(2b-2a,4c-2b)^\mathrm{T}=\gamma_1$ (see \eqref{5-gamma}). Using \eqref{rank5-Ust} we have
\begin{align}
   & S(0,0,0)=\frac{(-q^2;q^4)_{\infty}}{(q^4;q^4)_{\infty}^2}\Big(\frac{1}{2}\big(H_1^{(0)}(q)+\widetilde{H}_1^{(0)}(q) \big)\theta_0^2 \nonumber \\
   &\quad +\frac{1}{2}\big(H_1^{(0)}(q)-\widetilde{H}_1^{(0)}(q) \big)\theta_1^2+H_1^{(1)}(q)\theta_0\theta_1\Big),  \label{proof-5-1} \\
    & S(1,1,1/2)=q^{-2}\frac{(-q^4;q^4)_{\infty}}{(q^4;q^4)_{\infty}^2}\Big(\frac{1}{2}\big(H_1^{(0)}(q)-\widetilde{H}_1^{(0)}(q)\big)\theta_0^2 \nonumber \\
   &\qquad +\frac{1}{2}\big(H_1^{(0)}(q)+\widetilde{H}_1^{(0)}(q) \big)\theta_1^2+H_1^{(1)}(q)\theta_0\theta_1\Big). \label{proof-5-2}
\end{align}

Similarly, setting $(a,b,c)$ as $(-1,-1,0)$ and $(0,0,1/2)$  which satisfy $(2b-2a,4c-2b)^\mathrm{T}=\gamma_2$ in \eqref{chi5-abc-H}, we deduce that
\begin{align}
  & S(-1,-1,0)=q^{-2}\frac{(-q^{2};q^4)_{\infty}}{(q^4;q^4)_{\infty}^2}\Big(\frac{1}{2}\big(H_2^{(0)}(q)-\widetilde{H}_2^{(0)}(q)\big)\theta_0^2 \nonumber \\
    & \quad +\frac{1}{2}\big(H_2^0(q)+\widetilde{H}_2^{(0)}(q)\big)\theta_1^2+H_2^{(1)}(q)\theta_0\theta_1\Big), \label{proof-5-4}\\
  &  S(0,0,1/2)=\frac{(-q^4;q^4)_{\infty}}{(q^4;q^4)_{\infty}^2}\Big(\frac{1}{2}\big(H_2^{(0)}(q)+\widetilde{H}_2^{(0)}(q)\big)\theta_0^2 \nonumber\\
    & \quad +\frac{1}{2}\big(H_2^{(0)}(q)-\widetilde{H}_2^{(0)}(q)\big)\theta_1^2 +H_2^{(1)}(q)\theta_0\theta_1\Big). \label{proof-5-5}
\end{align}

Finally, setting $(a,b,c)$ as $(0,-1,0)$ and $(1,0,1/2)$ which satisfy $(2b-2a,4c-2b)^\mathrm{T}=\gamma_3$ in \eqref{chi5-abc-H}, we deduce that
\begin{align}
 &   S(0,-1,0)=q^{-1}\frac{(-q^{2};q^4)_{\infty}}{(q^4;q^4)_{\infty}^2}\Big(\frac{1}{2}\big(H_3^{(1)}(q)+\widetilde{H}_3^{(1)}(q) \big)\theta_0^2 \nonumber\\
    & \quad +\frac{1}{2}\big(H_3^{(1)}(q)-\widetilde{H}_3^{(1)}(q) \big)\theta_1^2+H_3^{(0)}(q)\theta_0\theta_1\Big), \label{proof-5-7}\\
  &  S(1,0,1/2)=q^{-1}\frac{(-q^4;q^4)_{\infty}}{(q^4;q^4)_{\infty}^2}\Big(\frac{1}{2}\big( H_3^{(1)}(q)-\widetilde{H}_3^{(1)}(q)\big)\theta_0^2 \nonumber\\
    &\quad +\frac{1}{2}\big(H_3^{(1)}(q)+\widetilde{H}_3^{(1)}(q) \big)\theta_1^2+H_3^{(0)}(q)\theta_0\theta_1\Big). \label{proof-5-8}
\end{align}
Substituting \eqref{H10-product}--\eqref{H31-product} and \eqref{wH10-product}--\eqref{wH31-product} into \eqref{proof-5-1}--\eqref{proof-5-8}, and using the method in \cite{Frye-Garvan} to verify theta function identities, we prove \eqref{eq-rank5-1}--\eqref{eq-rank5-8}.

If the  vectors $(a_i,b_i,c_i)$ ($i=1,2$) satisfy that $a_i,b_i\in \mathbb{Z}$, $c_1-c_2\in \mathbb{Z}$ and
\begin{align}
   a_1-a_2=b_1-b_2=2(c_1-c_2),
\end{align}
then from \eqref{chi5-abc-H} we see that $S(a_1,b_1,c_1)$ and $S(a_2,b_2,c_2)$ only differ by the leading factors consist of powers of $q$ and some infinite products, namely,
\begin{align}\label{S-abc-relation}
    S(a_1,b_1,c_1)=q^{a_2^2+b_2^2-a_1^2-b_1^2}\frac{(-q^{2+4c_1};q^4)_\infty}{(-q^{2+4c_2};q^4)_\infty}S(a_2,b_2,c_2).
\end{align}
This proves \eqref{eq-rank5-3}--\eqref{eq-rank5-9}.
\end{proof}

Based on the identities in Theorem \ref{thm-rank5}, it is easy to check that $q^CS(a,b,c)$ is modular for the choices of $(a,b,c,C)$ given in Table \ref{tab-rank5}.
\begin{table}[H]
\renewcommand{\arraystretch}{1.5}
    \centering
    \begin{tabular}{c|ccccc}
    \hline
        $(a,b,c)$ & $(0,0,0)$  & $(1,1,1/2)$ &
        $(-1,-1,0)$ & $(0,0,1/2)$ & $(0,-1,0)$  \\
        \hline
        $C$ & $-55/84$  & $67/42$  & $137/84$  & $-5/42$ & $65/84$ \\
        \hline
        $(a,b,c)$  & $(1,0,1/2)$ & $(-1,-1,-2)$ & $(-2,-2,-1/2)$  & $(-1,-2,-1/2)$  & \\
        \hline
        $C$ & $43/42$ & $67/42$  & $331/42$ & $211/42$  & \\
        \hline
    \end{tabular}
    \caption{Values of $(a,b,c,C)$ such that $q^CS(a,b,c)$ is modular}
    \label{tab-rank5}
    \end{table}

\section{Concluding remarks}\label{sec-remark}
It is our hope that the rank reduction formulas \eqref{eq-reduce-even} and \eqref{eq-reduce-odd} may shed some light to solve higher rank cases. The computations get much more complicated as the rank increases. In fact,  when dealing with the rank six and seven cases, we already face difficulties in evaluating certain rank four and rank three Nahm-type sums. We will not discuss them here. Instead, we conclude this paper by presenting briefly a new proof to the rank three case which was first proved by Milas and Wang \cite{MW24}.

Setting $r=1$ in \eqref{eq-reduce-odd} we have
\begin{align}
    \chi_3(x_1,1/x_1,x_3;q)=\frac{(-q^{\frac{1}{2}x_3};q)_\infty}{(q;q)_\infty} \sum_{n=0}^\infty \frac{q^{\frac{1}{4}n^2}}{(q;q)_n}\big(\frac{x_3}{x_1}\big)^n \Big(\sum_{\ell \in \mathbb{Z}} q^{(\ell+\frac{n}{2})^2}x_1^{-\ell}\Big). \label{eq-rank3-start}
\end{align}
In particular, when $x_1=q^a$ ($a\in \mathbb{Z}$) and $x_3=q^c$ we have
\begin{align}
   & \chi_3(q^a,q^{-a},q^c;q)=q^{-\frac{1}{4}a^2}\frac{(-q^{\frac{1}{2}+c};q)_\infty}{(q;q)_\infty} \sum_{n=0}^\infty \frac{q^{\frac{1}{4}n^2+(c-\frac{1}{2}a)n}}{(q;q)_n} \sum_{\ell \in \mathbb{Z}}q^{(\ell+\frac{1}{2}n-\frac{1}{2}a)^2} \nonumber \\
    &=q^{-\frac{1}{4}a^2}\frac{(-q^{\frac{1}{2}+c};q)_\infty}{(q;q)_\infty}  \Big(\theta_0(q^{\frac{1}{4}})\sum_{\substack{n\in \mathbb{Z}_{\geq 0} \\ n\equiv a \!\!\! \pmod{2}}} \frac{q^{\frac{1}{4}n^2+(c-\frac{1}{2}a)n}}{(q;q)_n} \nonumber \\
    &\qquad \qquad +\theta_1(q^{\frac{1}{4}})\sum_{\substack{n\in \mathbb{Z}_{\geq 0} \\ n\equiv a+1 \!\!\! \pmod{2}}} \frac{q^{\frac{1}{4}n^2+(c-\frac{1}{2}a)n}}{(q;q)_n} \Big). \label{eq-rank3-mid}
\end{align}
Recall the following identities of Rogers (see \cite[pp.\ 330--332]{Rogers1894} and \cite[p.\ 330 (3), 2nd Eq.]{Rogers1917}):
\begin{align}
&\sum_{\substack{n\in \mathbb{Z}_{\geq 0} \\ n\equiv 0 \!\!\! \pmod{2}}}\frac{q^{\frac{1}{4}n^2}}{(q;q)_n}=\sum_{n=0}^\infty \frac{q^{n^2}}{(q;q)_{2n}}=\frac{(q^2,q^8,q^{10};q^{10})_\infty (q^6,q^{14};q^{20})_\infty}{(q;q)_\infty}, \quad  \label{S79}\\
&\sum_{\substack{n\in \mathbb{Z}_{\geq 0} \\ n\equiv 1 \!\!\! \pmod{2}}}\frac{q^{\frac{1}{4}n^2}}{(q;q)_n}=q^{\frac{1}{4}}\sum_{n=0}^\infty \frac{q^{n^2+n}}{(q;q)_{2n+1}}=q^{\frac{1}{4}}\frac{(q^3,q^7,q^{10};q^{10})_\infty (q^4,q^{16};q^{20})_\infty}{(q;q)_\infty},  \quad  \label{S94} \\
&\sum_{\substack{n\in \mathbb{Z}_{\geq 0} \\ n\equiv 0 \!\!\! \pmod{2}}}\frac{q^{\frac{1}{4}n^2+\frac{1}{2}n}}{(q;q)_n}=\sum_{n=0}^\infty \frac{q^{n^2+n}}{(q;q)_{2n}}=\frac{(q,q^9,q^{10};q^{10})_\infty (q^8,q^{12};q^{20})_\infty}{(q;q)_\infty}, \quad  \label{S99} \\
&\sum_{\substack{n\in \mathbb{Z}_{\geq 0} \\ n\equiv 1 \!\!\! \pmod{2}}}\frac{q^{\frac{1}{4}n^2+\frac{1}{2}n}}{(q;q)_n}=q^{\frac{3}{4}}\sum_{n=0}^\infty \frac{q^{n^2+2n}}{(q;q)_{2n+1}}=q^{\frac{3}{4}}\frac{(q^4,q^6,q^{10};q^{10})_\infty (q^2,q^{18};q^{20})_\infty}{(q;q)_\infty}. \label{Rogers-1}
\end{align}

Note that $(-q^{\frac{1}{2}+c};q)_\infty$ is modular after multiplying some powers of $q$ when $c\in \{-\frac{1}{2},0,\frac{1}{2}\}$. The identities \eqref{S79}--\eqref{Rogers-1} suggest that we may choose $c-\frac{1}{2}a\in \{0,\frac{1}{2}\}$ in order to make the right side of \eqref{eq-rank3-mid} being modular up to some power of $q$. Combining these two restrictions together we get six possible modular Nahm sums $\chi_3(q^a,q^{-a},q^c;q)$ with $(a,c)$ being
\begin{align}
    (-1,-1/2),~(-2,-1/2),~(0,0),~(-1,0),~(1,1/2),~(0,1/2).
\end{align}
For each of these cases, substituting \eqref{theta0-defn},  \eqref{theta1-defn} and \eqref{S79}--\eqref{Rogers-1} into \eqref{eq-rank3-mid}, we reprove the identities of Milas and Wang \cite[Theorem 1.2]{MW24} and confirm Conjecture \ref{conj-CMP} in the rank three case. The common feature of the proof in \cite{MW24} and the proof above is that they all use \eqref{S79}--\eqref{Rogers-1}. Note that in \cite{MW24} these six modular cases were discovered by analyzing dual triples of the sixth rank three example in Zagier's list \cite[Table 3]{Zagier}. Our approach is more straightforward in finding these modular cases and the treatment is unified.


\subsection*{Acknowledgements}
This work was supported by the National Key R\&D Program of China (Grant No.\ 2024YFA1014500) and the National Natural Science Foundation of China (Grant No.\ 123B1016).


\begin{thebibliography}{0}

\bibitem{Andrews1974} G.E. Andrews, On the general Rogers–Ramanujan theorem, Providence, RI: Amer. Math. Soc.,
1974.


\bibitem{Andrews-book} G.E. Andrews, The Theory of Partitions, Addison-Wesley, 1976; Reissued Cambridge, 1998.

\bibitem{LostNotebook2} G.E. Andrews and B.C. Berndt, Ramanujan's Lost Notebook, Part 2, Springer 2009.



\bibitem{CMP} C. Calinescu, A. Milas and M. Penn, Vertex algebraic structure of principal subspaces of basic $A_{2n}^{(2)}$-modules, J. Pure Appl. Algebra 220 (2016), 1752--1784.

\bibitem{CRW} Z. Cao, H. Rosengren and L. Wang, On some double Nahm sums of Zagier, J. Combin. Theory Ser. A 202 (2024), Paper No. 105819.

\bibitem{Cao-Wang} Z. Cao and L. Wang, Some new modular rank three Nahm sums from a lift-dual operation, arXiv:2412.15767.

\bibitem{Feigin} I. Cherednik and B. Feigin, Rogers--Ramanujan type identities and Nil-DAHA, Adv. Math. 248 (2013), 1050--1088.

\bibitem{Frye-Garvan} J. Frye and F.G. Garvan, Automatic proof of theta-function identities, in Elliptic
Integrals, Elliptic Functions and Modular Forms in Quantum Field Theory, Texts \&
Monographs in Symbolic Computation (Springer, Cham, 2019), pp. 195–258.


\bibitem{Gordon1961} B. Gordon, A combinatorial generalization of the Rogers--Ramanujan identities, Amer. J. Math. 83 (1961), 393--399.


\bibitem{MW24} A. Milas and L. Wang, Modularity of Nahm sums for the tadpole diagram, Int. J. Number Theory 20(1) (2024), 73--101.


\bibitem{Nahm1994} W. Nahm, Conformal field theory and the dilogarithm, In 11th International Conference on Mathematical Physics (ICMP-11) (Satelite colloquia: New Problems in General Theory of Fields and Particles), Paris, 1994, 662--667.

\bibitem{Nahmconf} W. Nahm, Conformal field theory, dilogarithms and three dimensional manifold, in ``Interface between Physics and Mathematics (Proceedings, Conference in Hangzhou, People's Republic of China, September 1993)'', eds. W. Nahm and J.-M. Shen, World Scientific, Singapore, 1994, 154--165.

\bibitem{Nahm2007} W. Nahm, Conformal field theory and torsion elements of the Bloch group, in ``Frontiers in Number Theory, Physics and Geometry'', II, Springer, 2007, 67--132.

\bibitem{Rogers1894} L.J. Rogers, Second memoir on the expansion of certain infinite products, Proc.
London Math. Soc. 25 (1894), 318--343.

\bibitem{Rogers1917} L.J. Rogers, On two theorems of combinatory analysis and some allied identities,
Proc. London Math. Soc. 16 (1917), 315--336.


\bibitem{Slater} L.J. Slater, Further identities of the Rogers--Ramanujan type. Proc. Lond. Math. Soc. (2) 54(1) (1952), 147--167.

\bibitem{VZ}  M. Vlasenko and S. Zwegers, Nahm's conjecture: asymptotic computations and counterexamples, Commun. Number Theory Phys. 5(3) (2011), 617--642.

\bibitem{Wang-rank2} L. Wang, Identities on Zagier's rank two examples for Nahm's problem, Res.\ Math.\ Sci. (2024) 11:49.

\bibitem{Wang-rank3} L. Wang, Explicit forms and proofs of Zagier's rank three examples for Nahm's problem, Adv. Math. 450 (2024), 109743.

\bibitem{Wang-counter} L. Wang, Counterexamples to Zagier's  duality conjecture on Nahm sums, arXiv:2411.09701.

\bibitem{Zagier} D. Zagier, The dilogarithm function, in Frontiers in Number Theory, Physics and Geometry, II, Springer, 2007, 3--65.
\end{thebibliography}
\end{document}